\newcommand{\ve}{\varepsilon}
\newcommand{\mL}{\mathcal{L}}
\newcommand{\mN}{\mathcal{N}}
\newcommand{\ps}{\partial_s}
\newcommand{\pt}{\partial_t}
\newcommand{\Dp}{\mathrm{D}_p}
\newcommand{\nx}{\nabla_x}
\newcommand{\RR}{\mathbb{R}}
\newcommand{\EE}{\mathbb{E}}
\newcommand{\tp}{^{\top}}
\newcommand{\abs}[1]{\left|#1\right|}
\newcommand{\parentheses}[1]{\left(#1\right)}
\newcommand{\sqbra}[1]{\left[#1\right]}
\newcommand{\pd}[2]{\dfrac{\partial #1}{\partial #2}}
\newcommand{\rd}{\mathrm{d}}
\newtheorem{theorem}{Theorem}
\newtheorem{proposition}[theorem]{Proposition}%
\newtheorem{assumption}{Assumption}
\newtheorem{remark}{Remark}%
\begin{document}

\title[Forward-backward score dynamics]{A deep learning algorithm for computing mean field control problems via forward-backward score dynamics}








\author[1]{\fnm{Mo} \sur{Zhou}}\email{mozhou366@math.ucla.edu}

\author[1]{\fnm{Stanley} \sur{Osher}}\email{sjo@math.ucla.edu}

\author*[2]{\fnm{Wuchen} \sur{Li}}\email{wuchen@mailbox.sc.edu}

\affil[1]{\orgdiv{Department of Mathematics}, \orgname{University of California}, \orgaddress{\city{Los Angeles}, \postcode{90095}, \state{CA}, \country{USA}}}

\affil[2]{\orgdiv{Department of Mathematics}, \orgname{University of South Carolina}, \orgaddress{\city{Columbia}, \postcode{29208}, \state{SC}, \country{USA}}}



\abstract{We propose a deep learning approach to compute mean field control problems with individual noises. The problem consists of the Fokker-Planck (FP) equation and the Hamilton-Jacobi-Bellman (HJB) equation. Using the differential of the entropy, namely the score function, we first formulate the deterministic forward-backward characteristics for the mean field control system, which is different from the classical forward-backward stochastic differential equations (FBSDEs). We further apply the neural network approximation to fit the proposed deterministic characteristic lines. Numerical examples, including the control problem with entropy potential energy, the linear quadratic regulator, and the systemic risks, demonstrate the effectiveness of the proposed method. }

\keywords{mean field control; Forward-backward score dynamics; Deep learning algorithms.}



\maketitle

\section{Introduction}

Mean field control (MFC) and mean field games (MFG) have emerged as powerful frameworks in the realm of control theory, offering a versatile approach to model and optimize large-scale systems with a multitude of interacting components among agents. This paradigm finds applications across various domains, such as economics and finance \cite{carmonaa2023deep}, herd behavior \cite{bauso2016opinion}, robotics \cite{elamvazhuthi2019mean}, and data science \cite{lin2020apac}. Its significance lies in its ability to address complex systems with a large number of interacting agents by considering their mean field behaviors, thereby reducing the computational burden associated with analyzing each individual agent.

Along with the MFC problem, two important equations characterize the optimality condition of the system. The first one is the Fokker-Planck (FP) equation, which describes the evolution of the density of the system. The second one is the Hamilton--Jacobi-Bellman (HJB) equation, which describes the optimal expected cost w.r.t. certain initialization. Solving the coupled FP--HJB system below is crucial for solving the MFC problem
\begin{equation*}
\left\{\begin{aligned}
&\pt\rho(t,x) + \nabla_x\cdot \parentheses{\rho(t,x) \Dp H(t,x,\nabla_x\phi(t,x))}=\frac1\beta \Delta_x \rho(t,x),\\
&\pt\phi(t,x)+H(t,x,\nabla_x\phi(t,x))+\frac1\beta \Delta_x\phi(t,x) = f(t,x,\rho(t,x)).
\end{aligned}\right.
\end{equation*}
Here the initial condition is $\rho(0,x)=\rho_0(x)$ and the terminal condition is $\phi(T,x)=-V(x)$, where $\rho_0\colon \mathbb{R}^d\rightarrow\mathbb{R}$ is a probability density function and $V\colon \mathbb{R}^d\rightarrow\mathbb{R}$ is a function. $H(t,x,p)\colon\mathbb{R}\times\mathbb{R}^d\times\mathbb{R}^d\rightarrow \mathbb{R}$ is the Hamiltonian of the system, $x\in\mathbb{R}^d$ and $p\in\mathbb{R}^d$ are the space and adjoint variables respectively. $\Dp$ denotes the gradient of $H$ w.r.t. adjoint variable $p$. And $1/\beta>0$ is a diffusion coefficient parameter, which characterizes the intensity of diffusion. The function $f\colon\mathbb{R}\times\mathbb{R}^d\times\mathbb{R}\rightarrow\mathbb{R}$ involves the density $\rho$, which represents the interaction behaviors of agents. The density dependency in Hamilton-Jacobi equations distinguishes the MFC problem from the standard stochastic control problem. 
The details for the system are introduced in Section \ref{sec:background}.

One traditional way to solve the MFC problem is through the FBSDE system, which utilizes the stochastic characterization of the problem. The viscous state dynamic, also known as the stochastic characteristic curve, is often studied together with the backward adjoint equation that describes the cost \cite{carmona2018probabilistic} or shadow price \cite{yong1999stochastic}.
This gives the classic FBSDE system \cite{zhou1997hamiltonian}. The theoretical analysis for this method is elegant. However, these approaches heavily rely on the sampling of Brownian motions (BM), which may give large numerical errors from random trajectories.

In recent years, the score function has emerged as a powerful tool in many problems, such as the generative model and the MFC. The score function is the gradient of the logarithm of the density function $\nabla\log\rho(t,x)$. Based on this score function, the probability flow \cite{boffi2023probability} is defined to characterize the stochastic state dynamic with a deterministic ODE, potentially alleviating the issue of sampling the Brownian motion.

In this paper, we propose a new framework to solve the MFC problem based on the score function. This is to formulate deterministic characteristics lines for the Fokker-Planck equations and Hamilton-Jacobi equations. In detail, we denote $x_t$ as the probability flow of the state, which satisfies an ODE whose velocity is subtracted by the score function.
We also define $y_t = \phi(t,x_t)$ as an analog for the adjoint process.  
These two dynamics form a forward-backward score ODE system, which is summarized below. 
\begin{equation}\label{eq:ODE1}
\left\{ \begin{aligned}
\pt x_t &= \Dp H(t,x_t,z_t) - \frac1\beta \nx \log \rho(t,x_t),\\
\pt y_t &= L(t,X_t,\Dp H(t,x_t,z_t)) + f_t - \frac1\beta h_t - \frac1\beta z_t\tp \nx \log \rho(t,x_t),
\end{aligned} \right.
\end{equation}
with given initial condition $x_0$ and terminal condition $y_T = -V(x_T)$, where $V(\cdot)$ is the terminal cost. Here, $z_t$, $h_t$, and $f_t$ are short for $\nx\phi(t,x_t)$, $\Delta_x\phi(t,x_t)$, and $f(t,x_t,\rho(t,x_t))$ respectively. And $\phi$ will be parametrized as a neural network later. A detailed description of system \eqref{eq:ODE1} is given in Section \ref{sec:background}. Clearly, when $\beta=\infty$, equation \eqref{eq:ODE1} is the classical deterministic characteristic for the HJB without viscosity. 

In the algorithm, we parameterize $\phi$ as a neural network and construct a loss function to match the adjoint process $y_t$ with its neural network parametrization. Meanwhile, we also formulate a density estimation method to approximate the gradient of the logarithm of the density function. This density estimation distinguishes our method from those that require sampling of Brownian motions.
The validity of the proposed method is justified through several numerical examples including MFC problems with an entropy potential energy, linear quadratic problems, and system risks.

Many studies have been conducted on MFG/MFC problems. Some of the works focus on the McKean--Vlasov equation for the system \cite{carmona2015forward,de2021backward,germain2022numerical,han2022learning}. Others focus on controlling the physical and social systems, arising from conservation laws \cite{li2022controlling,li2023controlling} and reaction-diffusion equations \cite{li2022computational}. In recent years, there has been a growing trend to use machine learning approaches to compute these problems, such as \cite{angiuli2022unified, beck2019machine,ruthotto2020machine,zhou2021actor}. There is also research studying convergence properties of these machine learning methods \cite{carmona2021convergence,carmona2022convergence}. There are also other algorithms such as the Picard iterations method \cite{ji2020three}.
Many of the works mentioned above rely heavily on the theory of FBSDE \cite{ma1999forward}. This FBSDE is usually computed through the shooting method, where one matches the terminal condition with its approximation. However, such methods rely heavily on the sampling of Brownian motion, which only has an accuracy of order one-half. In numerical examples, our deterministic score dynamic provides an alternative for this problem. We leave the theoretical comparison for algorithms using forward-backward score dynamics and FB SDEs in future works. 

The rest of this paper is organized as follows. 
Section \ref{sec:background} provides a theoretical background for the MFC problem and introduces our forward-backward ODE score system. In Section \ref{sec:numerical_alg}, we present a detailed description of our numerical method, including the construction of the loss function, accompanied by theoretical justification, the estimation for the density function, and the numerical discretization. Finally, Section \ref{sec:example} shows numerical examples that validate the effectiveness of our proposed algorithm, including the MFC problem with an entropy potential energy, the linear quadratic regulator, and the systemic risks.

\section{Formulation of the score-based mean field control problem}\label{sec:background}

In this section, we briefly review the MFC problem and formulate the deterministic forward-backward ODE system, with the score functions to represent the viscosity for the density function.

We consider the MFC problem described by the cost functional:
\begin{equation}\label{eq:cost_density}
\inf_v \int_0^T \int_{\RR^d} \parentheses{L(t,x,v(t,x)) \rho(t,x) + F(t,x,\rho(t,x))} \,\rd x \,\rd t + \int_{\RR^d} V(x) \rho(T,x) \,\rd x\,
\end{equation}
where $L\colon \RR \times \RR^d \times \RR^d \to \RR$ is the Lagrangian of the system, $v\colon \RR \times \RR^d \to \RR^d$ is the velocity field, $F\colon \RR \times \RR^d \times \RR \to \RR$ is the running cost that involves the density, and $V \colon \RR^d \to \RR$ is the terminal energy.
Here, the density $\rho(t,x)$ satisfies the Fokker-Planck equation
\begin{equation}\label{eq:FokkerPlanck}
\pt\rho(t,x) + \nabla_x\cdot(\rho(t,x)v(t,x))=\frac1\beta \Delta_x \rho(t,x),\quad \rho(0,x)=\rho_0(x).
\end{equation}
The objective is to minimize the cost functional \eqref{eq:cost_density} over all feasible velocity fields of control.

We give the following assumption for the system.
\begin{assumption}\label{assump:convex_L}
The Lagrangian $L(t,x,v)$ is strongly convex in $v$.
\end{assumption}
This assumption guarantees the well-posedness of the MFC problem, which is necessary in our setting. In more general control problems, where the drift is nonlinear in the control field and the volatility is not constant, well-posedness may be guaranteed even if the Lagrangian is not convex. The well-posedness is determined by the concavity (or convexity, depending on the definition,) of the generalized Hamiltonian \cite{yong1999stochastic,zhou2023policy}. 

Under Assumption \ref{assump:convex_L}, the Hamiltonian, defined as the Legendre transform of the Lagrangian, attains a well-defined expression
\begin{equation}\label{eq:Hamiltonian}
H(t,x,p) = \sup_{v \in \RR^d} v\tp p - L(t,x,v).
\end{equation}
Applying standard results in convex analysis, we know that the maximum in \eqref{eq:Hamiltonian} is realized at \begin{equation}\label{eq:optimality}
v^* = \Dp H(t,x,p),
\end{equation}
where $\Dp$ refers to the gradient w.r.t. $p$ (cf. \cite{villani2021topics} chapter 2).

Next, we analyze the MFC problem by introducing a Lagrange multiplier, $\phi(t,x)$, resulting in the following expression
\begin{equation}\label{eq:Lagrange_multiplier}
\begin{aligned}
\inf_{v} \sup_\phi & \int_0^T \int_{\RR^d} 
\left[ L(t,x,v(t,x)) \rho(t,x) + F(t,x,\rho(t,x)) \right.\\
& \hspace{0.5in} + \phi(t,x)\left. \left(\pt\rho(t,x) + \nabla_x\cdot(\rho(t,x)v(t,x)) - \frac1\beta \Delta_x \rho(t,x) \right)\right] \,\rd x \,\rd t \\
& + \int_{\RR^d} V(x) \rho(T,x) \,\rd x
\end{aligned}
\end{equation}
Solving the critical point system for the above inf-sup problem yields the following proposition.

\begin{proposition}\label{prop:FP_HJB}
Let Assumption \ref{assump:convex_L} hold. The optimal velocity field for the MFC problem is given by 
\begin{equation*}
v(t,x) = \Dp H(t,x,\nabla_x\phi(t,x)).
\end{equation*}
Here $\phi\colon [0,T]\times \mathbb{R}^d\rightarrow\mathbb{R}$ is a function that satisfies the following FP--HJB system with the density function $\rho(t,x)$

\begin{equation}\label{eq:FP_HJB}
\left\{\begin{aligned}
&\pt\rho(t,x) + \nabla_x\cdot \parentheses{\rho(t,x) \Dp H(t,x,\nabla_x\phi(t,x))}=\frac1\beta \Delta_x \rho(t,x),\\
&\pt\phi(t,x)+H(t,x,\nabla_x\phi(t,x))+\frac1\beta \Delta_x\phi(t,x) = f(t,x,\rho(t,x)),\\
& \rho(0,x)=\rho_0(x),\quad \phi(T,x)=-V(x), 
\end{aligned}\right.
\end{equation}
where $f(t,x,\rho(t,x)) = \pd{F}{\rho}(t,x,\rho(t,x))$.
\end{proposition}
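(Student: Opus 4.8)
The plan is to derive the system \eqref{eq:FP_HJB} by taking variations of the Lagrangian-multiplier functional \eqref{eq:Lagrange_multiplier} with respect to its three free arguments: the multiplier $\phi$, the density $\rho$, and the velocity field $v$. First I would vary with respect to $\phi$. Since $\phi$ enters \eqref{eq:Lagrange_multiplier} linearly, setting $\delta/\delta\phi = 0$ simply recovers the constraint, i.e. the Fokker-Planck equation $\pt\rho + \nabla_x\cdot(\rho v) = \frac1\beta\Delta_x\rho$ together with its initial condition $\rho(0,x)=\rho_0(x)$; this is the first line of \eqref{eq:FP_HJB} once we substitute the optimal $v$ found below.

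Next I would vary with respect to $v$ pointwise in $(t,x)$. The only terms in \eqref{eq:Lagrange_multiplier} involving $v$ are $L(t,x,v)\rho + \phi\,\nabla_x\cdot(\rho v)$; integrating the second term by parts in $x$ turns it into $-\rho\, v\tp\nabla_x\phi$ (boundary terms vanish under the standard decay assumptions on $\rho$). Hence the $v$-dependent part of the integrand is $\rho\,(L(t,x,v) - v\tp\nabla_x\phi)$, and since $\rho>0$, stationarity in $v$ gives $\nabla_x\phi = \Dp L(t,x,v)$, equivalently, by Assumption \ref{assump:convex_L} and the Legendre duality \eqref{eq:Hamiltonian}--\eqref{eq:optimality}, the optimal control $v(t,x) = \Dp H(t,x,\nabla_x\phi(t,x))$. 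This is exactly the claimed optimal velocity field, and substituting it into the Fokker-Planck equation yields the first PDE of \eqref{eq:FP_HJB}.

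Finally I would vary with respect to $\rho$. Collecting all $\rho$-dependent terms in \eqref{eq:Lagrange_multiplier} — namely $L(t,x,v)\rho + F(t,x,\rho) + \phi\,\pt\rho + \phi\,\nabla_x\cdot(\rho v) - \frac1\beta\phi\,\Delta_x\rho$, plus the terminal term $V(x)\rho(T,x)$ — I integrate by parts to move derivatives off $\rho$: $\int_0^T\phi\,\pt\rho\,\rd t = [\phi\rho]_0^T - \int_0^T \pt\phi\,\rho\,\rd t$; $\int\phi\,\nabla_x\cdot(\rho v)\,\rd x = -\int \rho\, v\tp\nabla_x\phi\,\rd x$; and $\int\phi\,\Delta_x\rho\,\rd x = \int \rho\,\Delta_x\phi\,\rd x$. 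Setting the resulting coefficient of $\delta\rho$ to zero in the interior gives $L(t,x,v) + f(t,x,\rho) - \pt\phi - v\tp\nabla_x\phi - \frac1\beta\Delta_x\phi = 0$ with $f = \partial F/\partial\rho$; substituting the optimal $v$ and using $H(t,x,\nabla_x\phi) = v\tp\nabla_x\phi - L(t,x,v)$ from \eqref{eq:Hamiltonian}, this rearranges to $\pt\phi + H(t,x,\nabla_x\phi) + \frac1\beta\Delta_x\phi = f(t,x,\rho)$, the second PDE of \eqref{eq:FP_HJB}. The boundary term $[\phi\rho]_{t=T}$ together with $\int V(x)\rho(T,x)\,\rd x$ forces $\phi(T,x) = -V(x)$, while the fixed initial datum $\rho(0,x)=\rho_0(x)$ kills the $t=0$ boundary contribution. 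I expect the main obstacle to be purely bookkeeping: tracking the integration-by-parts boundary terms in both $t$ and $x$ carefully so that the terminal condition $\phi(T,x)=-V(x)$ emerges with the correct sign, and justifying the vanishing of spatial boundary terms (which we take for granted under suitable decay of $\rho$, $\phi$ at infinity); the PDE content itself is a direct Legendre-transform computation.
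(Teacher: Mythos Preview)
Your proposal is correct and follows essentially the same approach as the paper: both derive the FP--HJB system by taking variations of the Lagrangian functional \eqref{eq:Lagrange_multiplier} with respect to $\phi$, $v$, and $\rho$, using integration by parts to move derivatives off $\rho$ and invoking Legendre duality \eqref{eq:Hamiltonian}--\eqref{eq:optimality} to express the optimal $v$ in terms of $\nabla_x\phi$. The only cosmetic difference is the order in which the variations are taken and that you extract the terminal condition $\phi(T,x)=-V(x)$ directly from the $t=T$ boundary term in the $\rho$-variation, whereas the paper treats the variation in $\rho(T,\cdot)$ as a separate step.
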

\begin{proof}
For illustration purposes, we take the functional derivative of the inf-sup problem \eqref{eq:Lagrange_multiplier} for all the variables and derive the critical point system for the problem.
Taking the derivative of \eqref{eq:Lagrange_multiplier} w.r.t. $\phi$ directly results in the Fokker Planck equation \eqref{eq:FokkerPlanck}. To obtain other equations for the critical point system, we perform integration by part and reformulate \eqref{eq:Lagrange_multiplier} as 
\begin{equation}\label{eq:Lagrange_multiplier2}
\begin{aligned}
\inf_{v} \sup_\phi &\int_0^T \int_{\RR^d}
\left[ L(t,x,v(t,x)) \rho(t,x) + F(t,x,\rho(t,x)) - \pt\phi(t,x) \rho(t,x)  \right.\\
& \qquad \qquad \left.  - \nabla_x \phi(t,x)\tp v(t,x) \rho(t,x) - \frac1\beta \Delta_x \phi(t,x) \rho(t,x) \right] \,\rd x \,\rd t \\
&+\int_{\RR^d} \parentheses{V(x) \rho(T,x) + \phi(T,x) \rho(T,x) - \phi(0,x) \rho(0,x)} \,\rd x.
\end{aligned}
\end{equation}
Taking derivative of \eqref{eq:Lagrange_multiplier2} w.r.t. $\rho$, we obtain
$$L(t,x,v(t,x)) - \pt\phi(t,x) - \nabla_x \phi(t,x)\tp v(t,x) - \frac1\beta \Delta_x \phi(t,x) + \pd{F}{\rho}(t,x,\rho(t,x)) = 0,$$
which simplifies to
\begin{equation}\label{eq:HJB_proof}
\pt\phi(t,x) + \nabla_x \phi(t,x)\tp v(t,x) - L(t,x,v(t,x)) + \frac1\beta \Delta_x \phi(t,x) = f(t,x,\rho(t,x)).
\end{equation}
Taking derivative of \eqref{eq:Lagrange_multiplier2} w.r.t. $v$, we arrive at
$$\parentheses{\nabla_v L(t,x,v(t,x)) - \nx \phi(t,x)}\rho(t,x)=0,$$
which implies
\begin{equation}\label{eq:v_phix}
v(t,x) = \Dp H(t,x,\nx \phi(t,x))
\end{equation}
according to \eqref{eq:optimality}. Finally, taking derivative of \eqref{eq:Lagrange_multiplier2} w.r.t. $\rho(T,\cdot)$, we arrive at the terminal condition $\phi(T,x) = -V(x)$. If we plug \eqref{eq:v_phix} into the Fokker--Planck equation and \eqref{eq:HJB_proof}, we recover the FP--HJB system \eqref{eq:FP_HJB}. 
\end{proof}
\begin{remark}
The FP--HJB system in Proposition 1 corresponds to the characterization of MFC problem in \cite{bensoussan2013mean} section 4. Here, $\phi(t,x)$ solves the HJB equation, which characterizes optimality of the system. In contrast, the formulation for MFG problems does not include a central planner, and is therefore different. We refer the reader to \cite{bensoussan2013mean} section 3 for details. In an optimal control or MFG problem, the solution to the HJB equation not only captures optimality, but also refers to the value function, representing the expected cost (or reward) given a specific initial time and state. However, in MFC problems, the solution to the HJB equation is only a characterization of optimality and is not necessarily a value function. A value function for MFC problems can still be defined in terms of the distribution, referred to as the master equation. We refer the reader to \cite{cardaliaguet2019master} for details.
\end{remark}

As an illustrative example, consider the case where $L(t,x,v) = \frac12 \abs{v}^2$. This choice leads to $H(t,x,p) = \frac12 \abs{p}^2$, resulting in the following simplified FP--HJB system
\begin{equation}\label{eq:FP_HJB_LQ}
\left\{\begin{aligned}
&\pt\rho(t,x) + \nabla_x\cdot \parentheses{\rho(t,x) \nabla_x\phi(t,x)}=\frac1\beta \Delta_x \rho(t,x),\\
&\pt\phi(t,x) + \frac12 \abs{\nabla_x \phi(t,x)}^2 + \frac1\beta \Delta_x\phi(t,x) = f(t,x,\rho(t,x)),\\
& \rho(0,x)=\rho_0(x),\quad \phi(T,x)=-V(x). 
\end{aligned}\right.
\end{equation}
When $f=0$, the problem reduces to an optimal control one and the HJB equation becomes the viscous Burgers equation.

Traditional methods such as backward stochastic differential equations (BSDEs) often involve sampling for Brownian motion, leading to potential errors. In this work, we innovatively propose an alternative ODE system based on the score function. The probability flow \cite{boffi2023probability} associated with the Fokker-Planck equation \eqref{eq:FokkerPlanck} is defined as
\begin{equation}\label{eq:ODE_x_v}
\pt x_t = v(t,x_t) - \frac1\beta \nx \log \rho(t,x_t),
\end{equation}
where the density $\rho$ is involved. The term $\nabla_x \log \rho(t,x_t)$ is known as the score function, commonly used in generative models \cite{song2020score}. This probability flow constitutes an ODE system, with the only source of randomness being the initial condition $x_0$. If $x_0$ follows the distribution $\rho_0$, then the density for $x_t$ in \eqref{eq:ODE_x_v} precisely corresponds to the solution of the FP equation \eqref{eq:FokkerPlanck}.

According to Proposition \ref{prop:FP_HJB}, the probability flow under optimal velocity becomes the following ODE system
\begin{equation}\label{eq:probability_flow}
\pt x_t = \Dp H(t,x_t,\nx\phi(t,x_t)) - \frac1\beta \nx \log \rho(t,x_t).
\end{equation}

This deterministic system liberates us from the need for Brownian motion sampling. We next introduce our forward-backward ODE system.

\begin{proposition}
Let $\rho$, $\phi$ be the solution to the FP--HJB system \eqref{eq:FP_HJB}. Let $x_t$ be the probability flow defined by \eqref{eq:probability_flow}. Let $y_t = \phi(t,x_t)$. Then, $x_t$ and $y_t$ satisfies the forward-backward ODE system
\begin{equation}\label{eq:ODEsystem}
\left\{ \begin{aligned}
\pt x_t &= \Dp H(t,x_t,z_t) - \frac1\beta \nx \log \rho(t,x_t),\\
\pt y_t &= L(t,X_t,\Dp H(t,x_t,z_t)) + f_t - \frac1\beta h_t - \frac1\beta z_t\tp \nx \log \rho(t,x_t),
\end{aligned} \right.
\end{equation}
with initial condition $x_0 \sim \rho_0$ and terminal condition $y_T = -V(x_T)$.
Here, $f_t$ is short for $f(t,x_t,\rho(t,x_t))$, $z_t = \nx \phi(t,x_t)$, and $h_t = \Delta_x \phi(t,x_t)$.
\end{proposition}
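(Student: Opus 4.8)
The plan is to obtain the backward equation for $y_t$ by differentiating the definition $y_t=\phi(t,x_t)$ along the probability flow and then eliminating $\pt\phi$ using the HJB equation; the forward equation requires no work at all. Indeed, since $\rho,\phi$ solve \eqref{eq:FP_HJB}, Proposition \ref{prop:FP_HJB} tells us the optimal velocity is $v(t,x)=\Dp H(t,x,\nx\phi(t,x))$, so the probability flow \eqref{eq:probability_flow} is precisely the first line of \eqref{eq:ODEsystem} with the shorthand $z_t=\nx\phi(t,x_t)$. Likewise, the terminal condition $y_T=\phi(T,x_T)=-V(x_T)$ is immediate from $\phi(T,\cdot)=-V$ in \eqref{eq:FP_HJB}, and $x_0\sim\rho_0$ is part of the standing hypothesis on the probability flow; recall moreover that under this hypothesis $x_t$ has law $\rho(t,\cdot)$, which is what makes $f_t=f(t,x_t,\rho(t,x_t))$, $h_t$, and the score term $\nx\log\rho(t,x_t)$ meaningful along the trajectory.

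For the backward equation, I apply the chain rule to $y_t=\phi(t,x_t)$ and substitute the forward dynamics:
\[
\pt y_t = \pt\phi(t,x_t) + \nx\phi(t,x_t)\tp\pt x_t = \pt\phi(t,x_t) + z_t\tp\parentheses{\Dp H(t,x_t,z_t) - \frac1\beta\nx\log\rho(t,x_t)}.
\]
Next I substitute the HJB equation from \eqref{eq:FP_HJB} evaluated at the point $(t,x_t)$, i.e. $\pt\phi(t,x_t) = f_t - H(t,x_t,z_t) - \frac1\beta h_t$, obtaining
\[
\pt y_t = z_t\tp\Dp H(t,x_t,z_t) - H(t,x_t,z_t) + f_t - \frac1\beta h_t - \frac1\beta z_t\tp\nx\log\rho(t,x_t).
\]

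The one genuine point is to recognize that $z_t\tp\Dp H(t,x_t,z_t) - H(t,x_t,z_t) = L(t,x_t,\Dp H(t,x_t,z_t))$. This is the Fenchel equality for the Legendre pair: under Assumption \ref{assump:convex_L}, by \eqref{eq:Hamiltonian} and \eqref{eq:optimality} the supremum defining $H(t,x,p)$ is attained at $v^*=\Dp H(t,x,p)$, so $H(t,x,p) = (v^*)\tp p - L(t,x,v^*)$, which rearranges to $p\tp\Dp H(t,x,p) - H(t,x,p) = L(t,x,\Dp H(t,x,p))$. Inserting this identity with $p=z_t$ turns the last display into exactly the second line of \eqref{eq:ODEsystem}, completing the argument.

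I do not expect a real obstacle: the statement is a direct computation, and the only things needing care are bookkeeping ones — substituting the HJB equation \emph{along} the curve $t\mapsto x_t$ rather than at a frozen spatial point, and applying the Legendre identity with the correct arguments. The mild technical caveat is the standing regularity: the chain rule step presupposes that $\phi$ is $C^1$ in $(t,x)$ and that $t\mapsto x_t$ is absolutely continuous, and the use of $\rho(t,x_t)$ tacitly relies on the probability flow transporting $\rho_0$ to $\rho(t,\cdot)$; these are smoothness hypotheses on the solution of \eqref{eq:FP_HJB} and on \eqref{eq:probability_flow}, assumed rather than proved here.
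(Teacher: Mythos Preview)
Your proof is correct and follows essentially the same route as the paper: apply the chain rule to $y_t=\phi(t,x_t)$, substitute the HJB equation for $\pt\phi$, and then invoke the Legendre identity $p\tp\Dp H - H = L(\cdot,\Dp H)$ to convert the result into the stated form. The paper's proof is slightly more terse, but the steps and the key identity are identical.
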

\begin{proof}
The dynamic for $x_t$ comes from equation \eqref{eq:probability_flow}. 
For $y_t=\phi(t,x_t)$, we have
\begin{equation*}
\begin{aligned}
& \quad \pt y_t = \dfrac{\rd}{\rd t} \phi(t,x_t) = \pt \phi(t,x_t) + \nx \phi(t,x_t)\tp \pt x_t\\
& = f(t,x_t,\rho(t,x_t)) -\frac1\beta \Delta_x \phi(t,x_t) -  H\parentheses{t,x_t,\nx \phi(t,x_t)} \\
& \quad + \nx \phi(t,x_t)\tp \parentheses{\Dp H(t,x_t,\nx\phi(t,x_t)) - \frac1\beta \nx \log \rho(t,x_t)}\\
& = L(t,X_t,\Dp H(t,x_t,z_t)) + f_t - \frac1\beta h_t - \frac1\beta z_t\tp \nx \log \rho(t,x_t),
\end{aligned}
\end{equation*}
where we used the HJB equation and the probability flow in the second equality. The third equality is by the definition of Hamiltonian \eqref{eq:Hamiltonian}. This recovers the second ODE in \eqref{eq:ODEsystem}.
\end{proof}
\begin{remark}
This forward-backward ODE system can be viewed as a deterministic analog of the FBSDE, which is commonly studied in MFC problems. To present the FBSDE in our scenario, we first give the stochastic version of our MFC
\begin{equation*}
\inf_{v} \EE \sqbra{ \int_0^T \parentheses{L(t,X_t,v(t,X_t)) + F(t,X_t,\rho(t,X_t)) / \rho(t,X_t)} \,\rd t + V(X_T)}
\end{equation*}
subject to 
\begin{equation}\label{eq:SDE_x}
\rd X_t = v(t,X_t) \,\rd t + \sqrt{2/\beta} \,\rd W_t \quad\quad X_0 \sim \rho_0.
\end{equation}
Here, $\rho(t,x)$ is the density function for $X_t$, $W_t$ is a standard Wiener process in $\RR^d$, and the expectation is taken over the whole trajectory $X_t \sim \rho(t,\cdot)$.
The optimal stochastic state dynamic \eqref{eq:SDE_x}, together with the adjoint dynamic for value,
forms the FBSDE for our MFC problem
\begin{equation}\label{eq:FBSDE}
\left\{\begin{aligned}
\rd X_t &= \Dp H(t,X_t,Z_t) \,\rd t + \sqrt{2/\beta}\rd W_t,   & X_0 &\sim \rho_0,\\
\rd Y_t &= \parentheses{L(t,X_t,\Dp H(t,X_t,Z_t)) + f(t,X_t,\rho(t,X_t))} \,\rd t + \sqrt{2/\beta} Z_t\tp \,\rd W_t, \quad  & Y_T &= -V(X_T).
\end{aligned}\right.
\end{equation}
Here, $Z_t$ is the auxiliary variable for the BSDE. The unique solution to the BSDE above is $Y_t = \phi(t,X_t)$ and $Z_t = \nx\phi(t,X_t)$. In this case, $(X_t, Y_t, Z_t)$ are based on the solutions of stochastic dynamics \eqref{eq:FBSDE}. In comparison, $(x_t, y_t, z_t)$ are solutions from equation system \eqref{eq:ODEsystem}. They interact with each other through the density of $x_t$. 
\end{remark}

\section{Numerical method}\label{sec:numerical_alg}
In this section, we apply a neural network parametrization to $\phi$ and construct a loss function to match the forward-backward ODE system. Then, we introduce the density estimation function based on the kernel method. 

\subsection{Construction of loss function}
In practice, we can parametrize $\phi$ as a neural network $\phi_{\mN}$ and represent $z_t$ and $h_t$ through auto-differentiation. 
Then we construct a loss function that try to match the dynamic $y_t$ from  \eqref{eq:ODEsystem} and $\phi_\mN(t,x_t)$
\begin{equation}\label{eq:loss}
\mL = \EE \int_0^T \abs{\phi_\mN(t,x_t) - y_t}^2 \,\rd t,
\end{equation}
where the expectation is taken over $x_0 \sim \rho_0$. $x_t$ and $y_t$ are computed through the ODE system \eqref{eq:ODEsystem}, with $y_0 = \phi_\mN(0,x_0)$, $z_t = \nx \phi_\mN(t,x_t)$, and $h_t= 
\Delta_x \phi_\mN(t,x_t)$. 
\begin{proposition}
Let $\phi_\mN$ be a function such that $\phi_\mN(T,\cdot) = -V(\cdot)$. Let $\rho$ be the solution to the Fokker-Planck equation
\begin{equation*}
    \pt\rho(t,x) + \nabla_x\cdot \parentheses{\rho(t,x) \,\Dp H(t,x, \nabla_x\phi_\mN(t,x))}=\frac1\beta \Delta_x \rho(t,x),
\end{equation*}
with initial condition $\rho(0,\cdot) = \rho_0(\cdot)$. If the loss $\mL$ defined by \eqref{eq:loss} is $0$, then $\rho$ and $\phi_\mN$ is the solution to the FP--HJB system \eqref{eq:FP_HJB}.
\end{proposition}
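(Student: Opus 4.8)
The plan is to run the proof of the previous proposition in reverse: there the forward-backward ODE system \eqref{eq:ODEsystem} was derived from the FP--HJB system, and here we want to recover the HJB equation from the vanishing of the matching loss. Since the Fokker-Planck equation in \eqref{eq:FP_HJB} is satisfied by $(\rho,\phi_\mN)$ by hypothesis, the whole task is to show that $\phi_\mN$ also solves the HJB equation together with $\rho$. First I would use $\mL = 0$ to deduce that $\phi_\mN(t,x_t) = y_t$ for almost every pair $(t,\omega)$, and then, since $t\mapsto \phi_\mN(t,x_t)$ and $t\mapsto y_t$ are continuous (using the smoothness of $\phi_\mN$ already implicit in the definitions of $z_t$ and $h_t$, and the fact that $x_t,y_t$ solve ODEs with continuous right-hand sides), upgrade this to the identity $\phi_\mN(t,x_t) = y_t$ for every $t\in[0,T]$, almost surely. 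I would also note that this is consistent at the two endpoints, since $y_0 = \phi_\mN(0,x_0)$ by the construction of the loss and $y_T = -V(x_T) = \phi_\mN(T,x_T)$ by the assumed terminal condition $\phi_\mN(T,\cdot) = -V(\cdot)$.

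Next I would differentiate the identity $\phi_\mN(t,x_t) = y_t$ in $t$. On the left, the chain rule gives $\pt\phi_\mN(t,x_t) + z_t\tp\pt x_t$ with $z_t = \nx\phi_\mN(t,x_t)$ and $\pt x_t = \Dp H(t,x_t,z_t) - \frac1\beta\nx\log\rho(t,x_t)$ from the first line of \eqref{eq:ODEsystem}; on the right I substitute $\pt y_t$ from the second line of \eqref{eq:ODEsystem}. The score terms $-\frac1\beta z_t\tp\nx\log\rho(t,x_t)$ that appear on both sides cancel, leaving
\begin{equation*}
\pt\phi_\mN(t,x_t) + z_t\tp\Dp H(t,x_t,z_t) = L\parentheses{t,x_t,\Dp H(t,x_t,z_t)} + f_t - \frac1\beta h_t .
\end{equation*}
By the Legendre duality \eqref{eq:Hamiltonian}--\eqref{eq:optimality} (valid under Assumption \ref{assump:convex_L}), one has $z_t\tp\Dp H(t,x_t,z_t) - L(t,x_t,\Dp H(t,x_t,z_t)) = H(t,x_t,z_t)$, so this rearranges to $\pt\phi_\mN(t,x_t) + H(t,x_t,\nx\phi_\mN(t,x_t)) + \frac1\beta\Delta_x\phi_\mN(t,x_t) = f(t,x_t,\rho(t,x_t))$, which is precisely the HJB equation of \eqref{eq:FP_HJB} evaluated along the trajectory $x_t$.

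To conclude, I would invoke that $x_0\sim\rho_0$ and that $x_t$ is the probability flow of the Fokker-Planck equation in the statement, so that the law of $x_t$ is $\rho(t,\cdot)$; hence the identity above holds $\rho(t,\cdot)$-almost everywhere for each $t$, i.e.\ the HJB equation holds on the support of $\rho$, which is exactly the content of the FP--HJB characterization of the MFC problem. The steps that deserve the most care are the two ``soft'' ones rather than the algebra: promoting the a.e.-in-$t$ equality coming from $\mL=0$ to an everywhere identity so that term-by-term differentiation is legitimate (this needs the continuity and the tacit $C^{1,2}$ regularity of $\phi_\mN$, together with the continuity of $x_t,y_t$), and, if one insists on the HJB equation holding pointwise on all of $\RR^d$ rather than only $\rho(t,\cdot)$-a.e., using the parabolic smoothing of the Fokker-Planck equation together with a full-support hypothesis on $\rho_0$ to guarantee $\rho(t,\cdot)>0$ everywhere. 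I expect everything else to be exactly the cancellation displayed above.
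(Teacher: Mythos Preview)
Your proposal is correct and follows essentially the same route as the paper. The paper first writes $\phi_\mN(t,x_t)-y_t=\int_0^t\!\big(\ps\phi_\mN+H+\tfrac1\beta\Delta_x\phi_\mN-f\big)(s,x_s)\,\rd s$ and then reads off from $\mL=0$ that this HJB residual vanishes along trajectories, whereas you first pass from $\mL=0$ to $\phi_\mN(t,x_t)=y_t$ and then differentiate; the algebraic cancellation and use of Legendre duality are identical, and your explicit discussion of the continuity upgrade and the $\rho(t,\cdot)$--a.e.\ versus pointwise conclusion is in fact more careful than what the paper records.
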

\begin{proof}
According to the ODE system \eqref{eq:ODEsystem}, we obtain
\begin{align*}
 \phi_\mN(t,x_t) = &\phi_\mN(0,x_0) + \int_0^t \parentheses{ \ps \phi_\mN(s,x_s) + \nx \phi_\mN(s,x_s)\tp \ps x_s } \rd s\\
=& y_0 + \int_0^t \sqbra{ \ps \phi_\mN(s,x_s) + z_s\tp \parentheses{\Dp H(s,x_s,z_s) - \frac1\beta \nx \log \rho(s,x_s)} } \rd s.
\end{align*}
Also, $y_t$ satisfies
\begin{align*}
 y_t =& y_0 + \int_0^t \ps y_s \,\rd s \\
 =& y_0 + \int_0^t \left[ f_s - \frac1\beta \Delta_x \phi_\mN(s,x_s) - H(s,x_s,\nx \phi_\mN(s,x_s)) \right.\\
& \quad \quad \left. + z_s\tp\parentheses{\Dp H(s,x_s,\nx\phi_\mN(s,x_s)) - \frac1\beta \nx \log \rho(s,x_s)} \right] \rd s.
\end{align*}
Subtracting the above two equations, we obtain
$$\phi_\mN(t,x_t) - y_t = \int_0^t \parentheses{ \ps \phi_\mN(s,x_s) +  \frac1\beta \Delta_x \phi_\mN(s,x_s) + H(s,x_s,\nx \phi_\mN(s,x_s)) - f_s} \rd s.$$
So
$$\mL = \EE\int_0^T \sqbra{\int_0^t \parentheses{ \ps \phi_\mN(s,x_s) +  \frac1\beta \Delta_x \phi_\mN(s,x_s) + H(s,x_s,\nx \phi_\mN(s,x_s)) - f_s} \rd s}^2 \rd t.$$
Note that the integrand above is exactly the residual of the HJB equation, so $\mL=0$ implies that $\phi_\mN$ is the solution to the HJB equation. Therefore, $\rho$ and $\phi_\mN$ is the solution to the FP--HJB system \eqref{eq:FP_HJB}.
\end{proof}

\subsection{Density estimation}
Computing the density information is usually necessary and unavoidable in the MFG/MFC problems.
In this work, we consider the classic kernel density estimate (KDE) with a Gaussian Kernel.

Let 
$$\rho_{ref} = \dfrac1N \sum_{i=1}^N \delta_{x_i}$$
be the reference measure from sampling. We define the kernel density
\begin{equation*}
\widehat\rho(t,x) = K(\cdot,\sigma_K) \ast \rho_{ref} (x) =\int K(x-y,\sigma_K) \, \rho_{ref}(y) \, \rd y, 
\end{equation*}
where $K(z,\sigma_K)=(2\pi\sigma_K^2)^{-d/2} \exp(-|z|^2 / (2\sigma_K^2))$ is the Gaussian kernel with $\sigma_K>0$. The bandwidth $\sigma_K$ is a hyperparameter.

We remark that we have also tested other kernels such as the Wasserstein proximal kernel proposed in \cite{tan2023noise}. Its performance is similar to the Gaussian kernel. Thus, we just report Gaussian KDE in this work. 

\subsection{Numerical implementation of the score-based MFC solver}

In this section, we present the numerical implementation of the algorithm. 

The function $\phi(t,x)$ is parametrized by a neural network $\phi_\mN(t,x)$. As for the terminal condition $\phi(t,x)=-V(x)$, we give a hard parametrization through
\begin{equation}\label{eq:NN_terminal}
\phi_\mN(t,x) = \dfrac{T-t}{T} \mN(t,x) - \dfrac{t}{T} V(x),
\end{equation}
where $\mN(t,x)$ is a standard fully connected deep neural network. If an explicit terminal condition is absent (see the example in Section \ref{sec:SystemicRisk} below), we set $\phi_\mN(t,x) = \mN(t,x)$ and include an additional loss term to enforce the terminal condition.

We discretize the ODE system \eqref{eq:ODEsystem} using forward Euler scheme with $N_T+1$ nodes $\{t_j=j \Delta t \}_{j=0}^{N_T}$ and step size $\Delta t = T/N_T$,
\begin{equation}\label{eq:Euler_scheme}
\left\{ \begin{aligned}
x_{t_{j+1}} &= x_{t_j} + \parentheses{\Dp H(t_j,x_{t_j},z_{t_j}) - \frac1\beta \nx \log \widehat\rho(t_j,x_{t_j})} \Delta t\\
 y_{t_{j+1}} &= y_{t_j} + \parentheses{ f_{t_j} - \frac1\beta h_{t_j} - H(t_j,x_{t_j},z_{t_j}) + z_{t_j}\tp \Dp H(t_j,x_{t_j},z_{t_j}) - \frac1\beta z_{t_j}\tp \nx \log \widehat\rho(t_j,x_{t_j})} \Delta t
\end{aligned} \right.
\end{equation}
with $z_{t_j} = \nx \phi_\mN(t,x_{t_j})$ and $h_{t_j}=\Delta_x \phi_\mN(t,x_{t_j})$ obtained from the auto-differentiation, and $\widehat\rho(t,x)$ obtained from the KDE. Given $N$ sampled trajectories $\{x_t^{(i)}\}_{i=1}^N$, this KDE is given by
\begin{equation}\label{eq:KDE_discrete}
\widehat\rho(t,x) = \dfrac{1}{N} \sum_{i=1}^N K(x-x_t^{(i)},\sigma_K). 
\end{equation}
The loss function \eqref{eq:loss} is then discretized into
$$\dfrac{1}{N} \sum_{i=1}^N \sum_{j=1}^{N_T} \sqbra{\phi_\mN(t_j,x_{t_j}^{(i)}) - y_{t_j}^{(i)}}^2 \Delta t,$$
and we minimize this loss using the Adam optimization scheme. The algorithm is summarized in the pseudocode Algorithm \ref{alg}. The detailed parameters are deferred to the appendix.

\begin{algorithm}
\caption{Forward-backward score solver for the MFC problem}\label{alg}
\begin{algorithmic}
\Require MFC problem, $\sigma_K$, number of training steps $k_{end}$, parameters for Adam optimizer, batch size $N$, number of time intervals $N_T$
\Ensure the solution to the HJB equation
\State initialization: parameter for the network $\phi_\mN$
\For{$k=1$ \textbf{to} $k_{end}$}
\State Sample $N$ points $\{x_0^{(i)}\}_{i=1}^N$ from the initial distribution $\rho_0$
\State Compute $y_0^{(i)} = \phi_\mN(0,x_0^{(i)})$
\State Loss $\mL = 0$
\For{$j=0$ \textbf{to} $N_T-1$}
\State compute $\widehat\rho(t,x)$ through the KDE method \eqref{eq:KDE_discrete}
\State compute $\{z_{t_j}^{(i)}\}_{i=1}^N$ and $\{h_{t_j}^{(i)}\}_{i=1}^N$ through auto-differentiation
\State compute $\{x_{t_{j+1}}^{(i)}\}_{i=1}^N$ and $\{y_{t_{j+1}}^{(i)}\}_{i=1}^N$ through the Euler scheme \eqref{eq:Euler_scheme}
\State update loss $\mL = \mL + \dfrac{1}{N} \sum_{i=1}^N \parentheses{\phi_\mN(t_{j+1}, x^{(i)}_{t_{j+1}}) - y^{(i)}_{t_{j+1}}}^2 \Delta t$
\EndFor
\State update the parameters for $\phi_\mN$ through Adam method to minimize the loss $\mL$
\EndFor
\end{algorithmic}
\end{algorithm}

This algorithm can be generalized to the broader MFC setting, where the dependence of the HJB equation on the density $\rho$ is more intricate, as illustrated in Section \ref{sec:SystemicRisk}.

We will also compare our algorithm with the traditional BSDE method. In the BSDE approach, one sample the trajectories of the stochastic processes \eqref{eq:FBSDE} using Euler–Maruyama scheme instead of sampling \eqref{eq:ODEsystem} with scheme \eqref{eq:Euler_scheme}. The other part has the same implementation as our score method.
We also remark that in the BSDE method, a shooting loss that only try to match the terminal condition could be constructed. The performance for such shooting method is similar to matching the whole trajectory.


\section{Numerical examples}\label{sec:example}
In this section, we substantiate the efficacy of our algorithm through the presentation of three numerical examples. The first case under consideration has previously been examined in \cite{lin2020apac}, which has a potential energy. The second example pertains to a linear quadratic mean field control problem, while the third example delves into the realm of systemic risk in finance. Specifically, it addresses the intricate dynamics associated with banks' interactions through borrowing and lending, as elucidated in \cite{carmona2018probabilistic}. The details for the numerical implementation are deferred to the appendix. We also compare our score method with the traditional BSDE method. 

\subsection{An MFC example with entropy potential energy}
We first consider an example that is also studied in \cite{lin2020apac}. The objective is  
\begin{equation*}
\inf_v \int_0^T \int_{\RR^d} \parentheses{\frac12\abs{v(t,x)}^2 + \frac12 |x|^2 + \gamma\log(\rho(t,x)) - \gamma} \rho(t,x) \,\rd x \,\rd t + \int_{\RR^d} V(x) \rho(T,x) \,\rd x
\end{equation*}
subject to the dynamic \eqref{eq:ODE_x_v} with initial distribution $\rho_0 \sim N(0,\frac{1}{\alpha} I_d)$. Here $\gamma$ and $\alpha$ are two positive constants.
The FP--HJB equation pair is
\begin{equation}
\left\{\begin{aligned}
&\pt\rho(t,x) + \nx\cdot\big(\rho(t,x)\nabla_x\phi(t,x)\big)= \Delta_x \rho(t,x),\\
&\pt\phi(t,x) + \Delta_x\phi(t,x) + \frac12\abs{\nabla_x\phi(t,x)}^2 - \frac12\abs{x}^2 - \gamma \log\rho(t,x) = 0,\\
& \rho(0,x)=\rho_0(x),\quad \phi(T,x)=-V(x). 
\end{aligned}\right.
\end{equation}
With proper terminal cost and choice of $\gamma$ and $\alpha$ that satisfy $\alpha^2 + \gamma \alpha = 1$ (i.e. $\alpha = \dfrac{-\gamma + \sqrt{\gamma^2 + 4}}{2}$), an analytical solution to the problem is obtained, given by
$$\phi(t,x) = \parentheses{d \alpha + \dfrac{\gamma d}{2}\log(\dfrac{\alpha}{2\pi})}t - \dfrac{\alpha \abs{x}^2}{2}$$
and
$$\rho(t,x) = \parentheses{\dfrac{\alpha}{2\pi}}^{d/2} \exp\parentheses{-\dfrac{\alpha \abs{x}^2}{2}}.$$
This solution is utilized to assess the error and validate our algorithm. The outcomes of our numerical experiments are depicted in Figure \ref{fig:static}. The results are presented separately for $1$ and $2$ dimensions.

In the first row, representing the $1$ dimensional case, the initial figure in the first column displays the training curve for the neural network. It illustrates the $L^2$ relative error of $\phi$, as well as its gradient and Laplacian, presented in logarithmic scale. Notably, approximating $\phi$ solely with $L^2$ norm is insufficient due to the essential information provided by $\nabla_x\phi$ for determining the optimal policy (cf. \eqref{eq:optimality} with $p=\nabla_x\phi(t,x)$). Hence, we also report the errors for the derivatives. The shadow in the figure represents the standard deviation observed during multiple test runs. The final errors for these tests are $1.59\%$, $0.52\%$, $0.51\%$ respectively.  
As a comparison, the final errors using the BSDE method are $2.77\%$, $2.63\%$, $2.36\%$, which is higher than our score method.
The second figure in the first row illustrates a comparison between the initial value $\phi(0,\cdot)$ and its neural network approximation. Recall that we enforce a rigorous implementation of the terminal condition $\phi(T,\cdot)$ within the network. The network adeptly captures the value function.
The third and fourth figures present comparison plots for the terminal density $\rho(T,\cdot)$ and score function $\nabla_x \log(\rho(T,x))$ with the corresponding KDE approximation from the trajectory \eqref{eq:probability_flow}. We also report that the Wasserstein-2 distance between our estimated samples and the true density is \num{6.41e-2} for our score method and \num{8.23e-2} for the BSDE method. 

Moving to the second row, which corresponds to the $2$ dimensional scenario, the initial figure in the first column represents the training curve. The final errors are $1.82\%$, $0.54\%$, $0.53\%$. As a comparison, the final errors using the BSDE method are $1.92\%$, $2.32\%$, $2.14\%$, which is higher than our score method.
Given the challenge of directly visualizing high-dimensional input functions, a density plot of $\phi(0,\cdot)$ is shown in the second figure. Specifically, this plot illustrates the density function of $\phi(0,x_0)$ and its neural network approximation, where $x_0$ is uniformly distributed in the box $[-2/\sqrt{\alpha},2/\sqrt{\alpha}] \times [-2/\sqrt{\alpha},2/\sqrt{\alpha}]$. Note that $2/\sqrt{\alpha}$ is twice the standard deviation of the initial distribution. Similar visualization techniques for high-dimensional functions have been employed in previous studies \cite{han2020solving}.
The third and fourth figures in this row compare the contour plots of the density $\rho(T,\cdot)$ and its KDE approximation. The final Wasserstein-2 error for the samples is \num{9.18e-2} for our score method and \num{5.87e-2} for the BSDE method. 

\begin{figure*}[t!]
    \centering
    \includegraphics[width=0.242\textwidth]{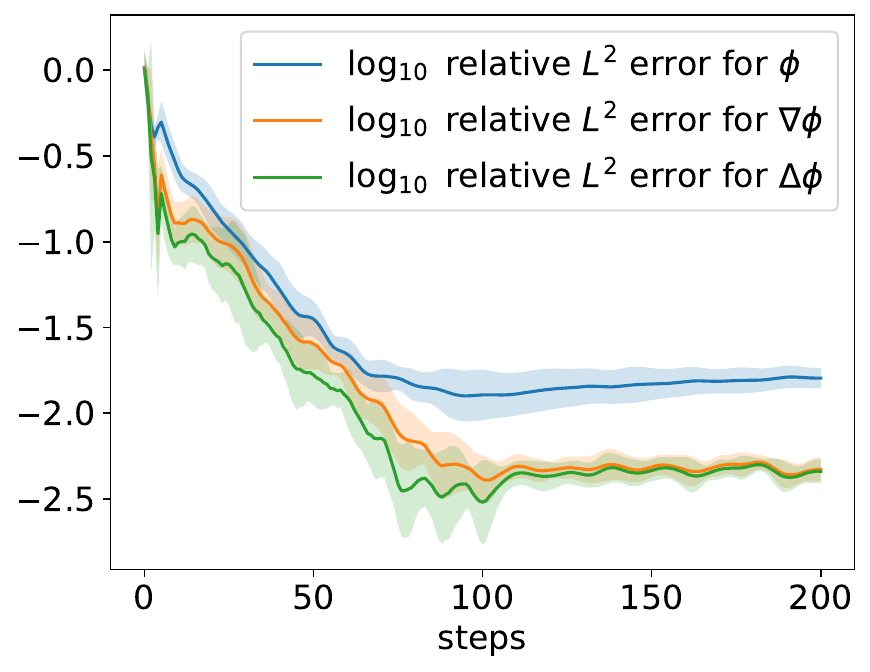}
    \includegraphics[width=0.242\textwidth]{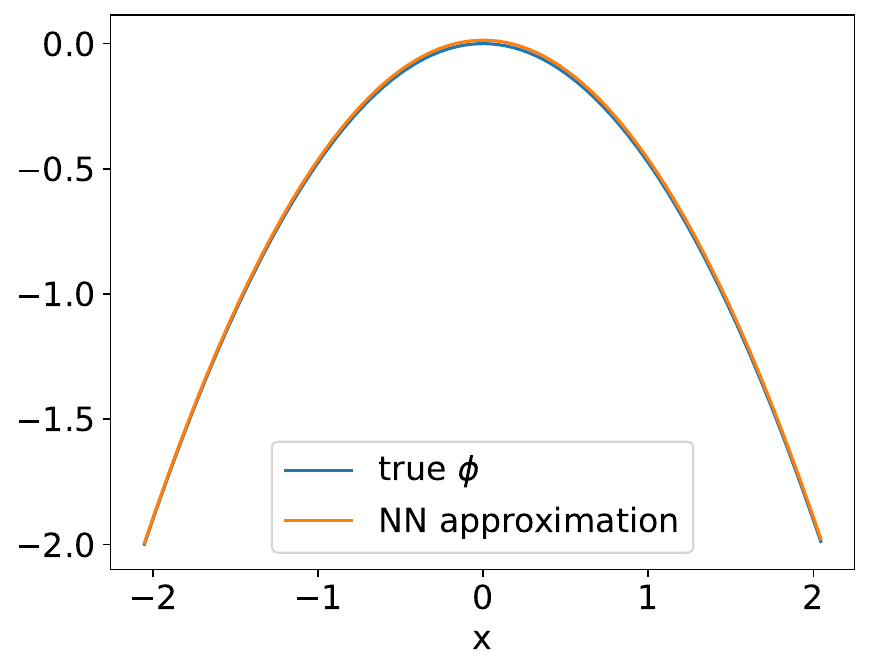}
    \includegraphics[width=0.242\textwidth]{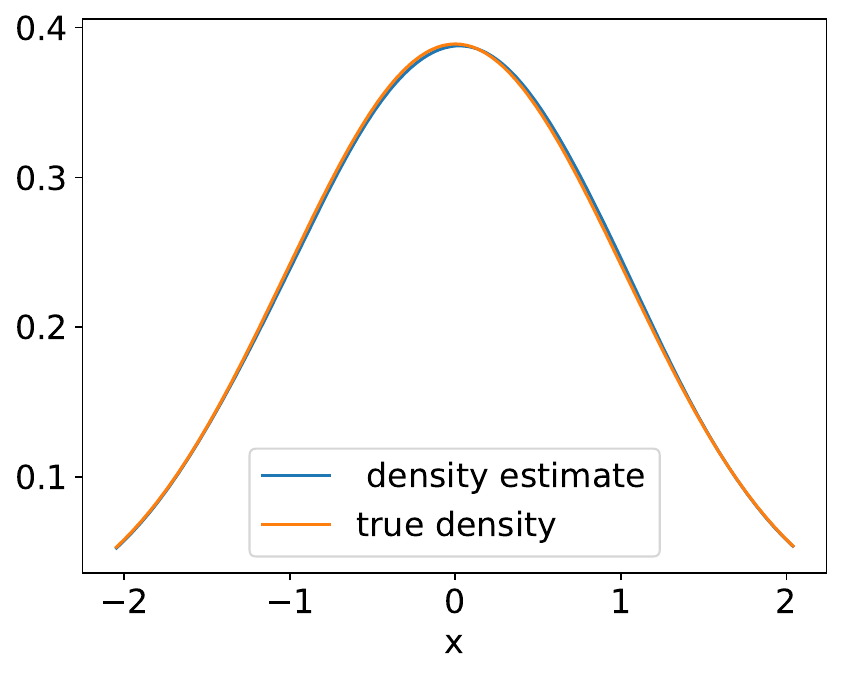}
    \includegraphics[width=0.242\textwidth]{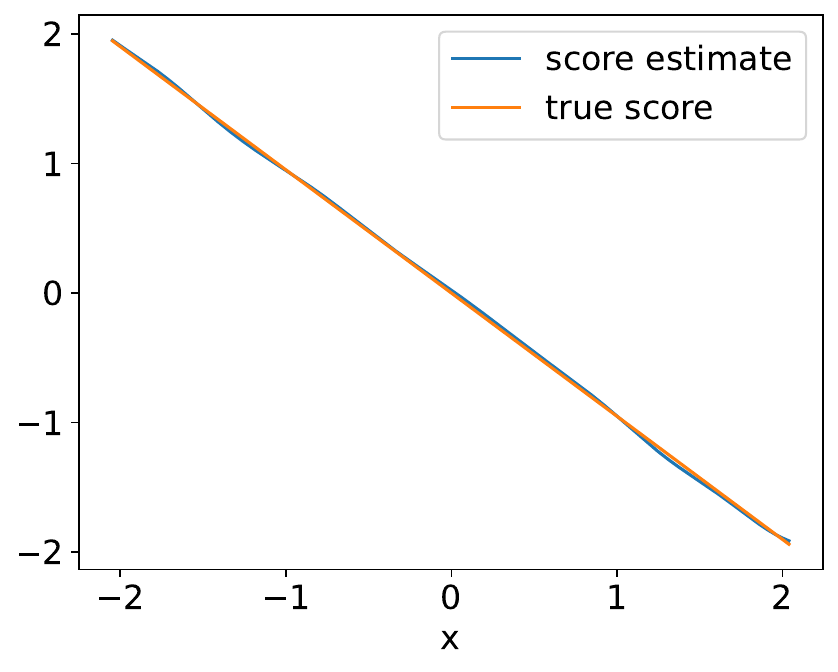}
    \includegraphics[width=0.242\textwidth]{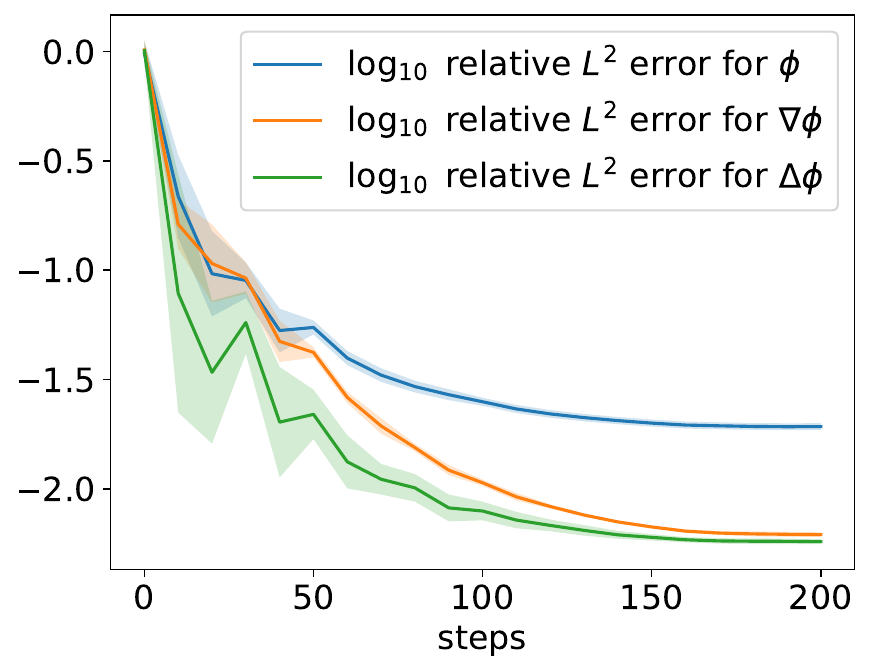}
    \includegraphics[width=0.242\textwidth]{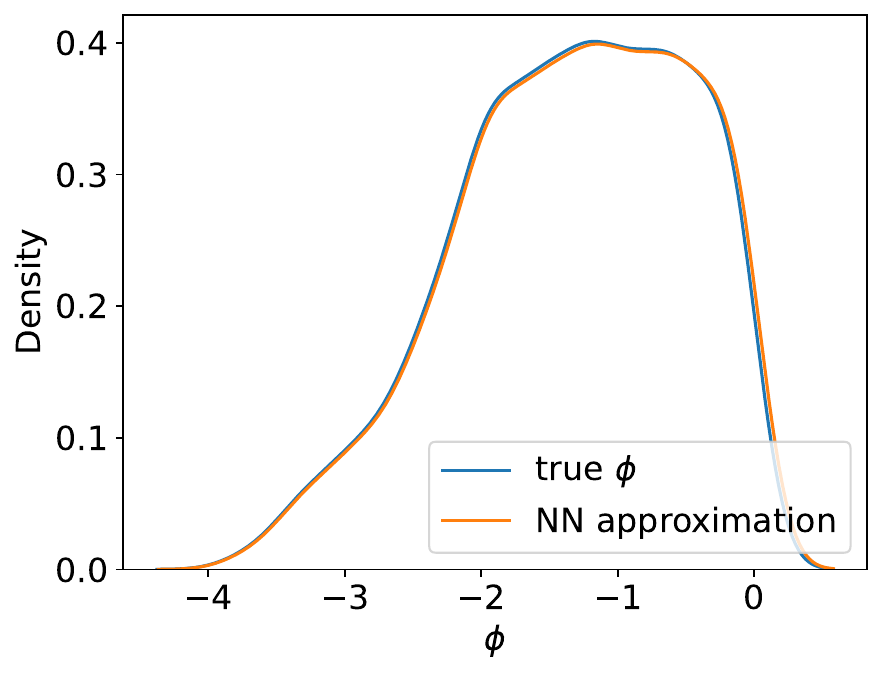}
    \includegraphics[width=0.49\textwidth]{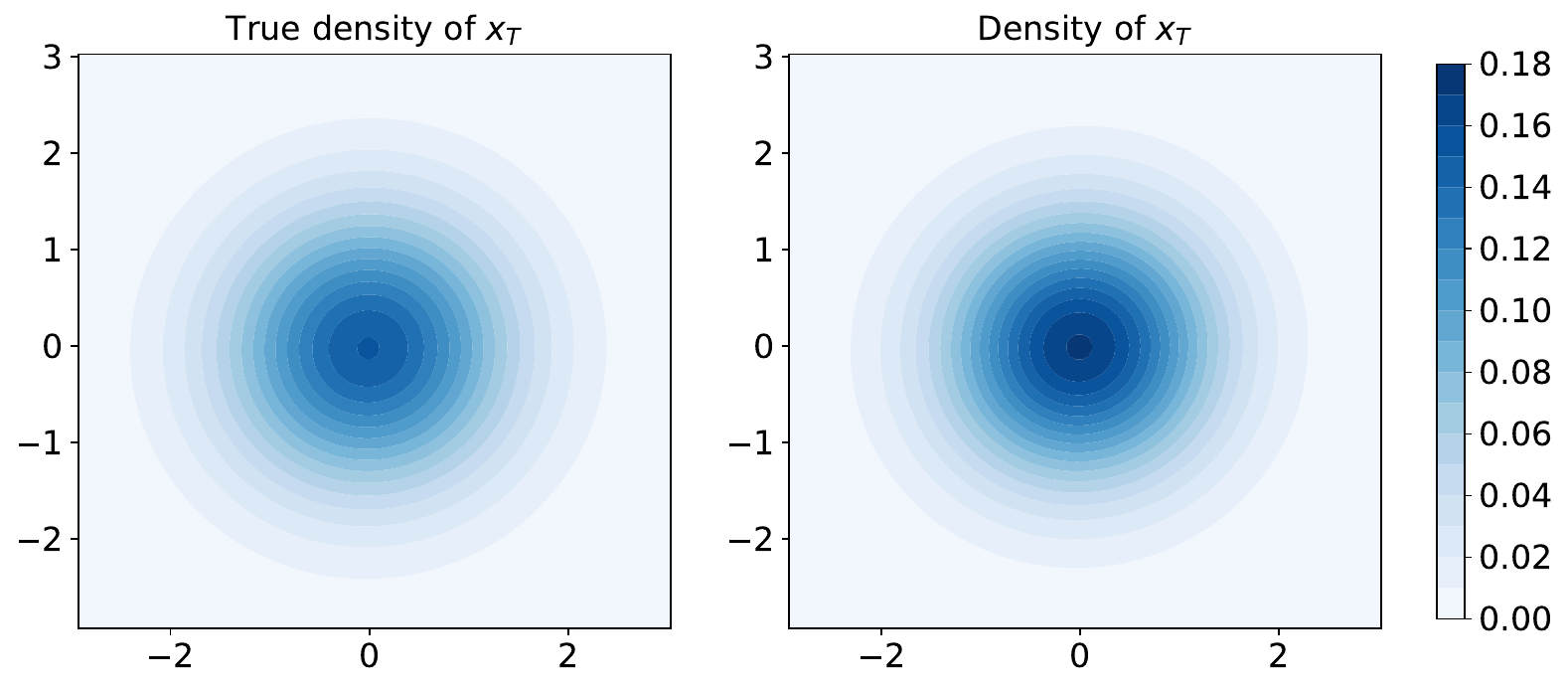}
    \caption{Numerical results for the MFC problem with log density running cost. The first row illustrates the results in $1$ dimension, including the training curve, plot of $\phi(0,\cdot)$, density plot, and score plot. The second row presents the results in $2$ dimensions, including the training curve, density plot for $\phi(0,x_0)$, and a contour plot comparison for the density $\rho(T,x_T)$.}
    \label{fig:static}
\end{figure*}

\subsection{An LQ example}
In the subsequent example, we explore a scenario involving linear dynamics and quadratic cost, commonly referred to as the linear quadratic (LQ) problem. The objective is defined by 
\begin{equation*}
\inf_v \int_0^T \int_{\RR^d} \parentheses{\frac12\abs{v(t,x)}^2 + \gamma\parentheses{\frac12 \rho(t,x) - \rho^*(t,x)}} \rho(t,x) \,\rd x \,\rd t + \int_{\RR^d} V(x) \rho(T,x) \,\rd x
\end{equation*}
subject to the dynamic \eqref{eq:ODE_x_v} with initial distribution $\rho_0 \sim N(0, 2(T+1) I_d / \beta)$.  In this example, the term with coefficient $\gamma$  is intentionally introduced to increase the difficulty of the problem. The FP--HJB system governing this problem is
\begin{equation*}
\left\{\begin{aligned}
&\pt\rho(t,x) + \nabla_x\cdot \parentheses{\rho(t,x) \nabla_x\phi(t,x)}=\frac1\beta \Delta_x \rho(t,x),\\
&\pt\phi(t,x) + \frac12 \abs{\nabla_x \phi(t,x)}^2 + \frac1\beta \Delta_x\phi(t,x) = \gamma\parentheses{\rho(t,x) - \rho^*(t,x)},\\
& \rho(0,x)=\rho_0(x),\quad \phi(T,x)=-V(x)=-|x|^2/2. 
\end{aligned}\right.
\end{equation*}
Here, $\rho^*$ is the true solution with the optimal trajectory, given by
\begin{equation*}
\rho^*(t,x) = \parentheses{4\pi (T-t+1)/\beta}^{-\frac{d}{2}} \exp\parentheses{-\dfrac{\beta\abs{x}^2}{4 (T-t+1)}}.
\end{equation*}
The solution to the HJB equation is 
\begin{equation*}
\phi(t,x) = \frac1\beta d\log\frac{1}{T-t+1}-\frac{\abs{x}^2}{2(T-t+1)}.
\end{equation*}
The numerical outcomes are presented in Figure \ref{fig:LQ}, with the first and second rows showing the results for $1$ and $2$ dimensions respectively.

In the $1$ dimensional scenario depicted in the first row, the first, second, and fourth figures mirror the format of Figure \ref{fig:static}, illustrating the learning curves, $\phi(0,\cdot)$ plot, and the score $\nabla_x \log(\rho(T,\cdot))$ plot. The final errors for $\phi$ and their derivatives are $2.00\%$, $2.76\%$, $2.75\%$.
The errors for the BSDE method are $2.14\%$, $2.84\%$, $2.83\%$. The final Wasserstein-2 error for the samples is \num{5.01e-2} for our score method and \num{4.81e-2} for the BSDE method. In this example, our score method has a similar performance compared with the BSDE method. 
The third figure displays the variance change for $x_t$, demonstrating the capability of our score-based dynamics to capture density changes.
The green curve represents the true variance under the optimal trajectory, showing a decreasing trend. In contrast, the blue curve represents the variance of $x_t$ when utilizing an untrained neural network, failing to capture the decreasing variance. After training, the variance for $x_t$ aligns with the true variance, as depicted by the orange curve. Note that we have the exact initial condition, so the error of variance at $t=0$ could be significantly reduced if we increase the number of samples. Here, we have an error of about $0.01$ at $t=0$ due to a limited number of samples. 

For the $2$ dimensional case presented in the second row, the four figures adopt a format similar to those in Figure \ref{fig:static}.
The final errors for $\phi$ and its derivatives are $1.55\%$, $2.64\%$, $2.65\%$. The errors for the BSDE method are $1.68\%$, $2.64\%$, $2.62\%$. The final Wasserstein-2 error for the samples is \num{6.25e-2} for our score method and \num{3.55e-2} for the BSDE method. In order to visualize the dynamic of the particles, we present the temporal evolution of particles in figure \ref{fig:LQ2d_scatter}. The first line depicts the dynamic state \eqref{eq:ODE_x_v} with the optimal velocity field and true density function. The second line shows the state dynamic with the velocity field and the density approximated by the neural network and KDE estimate respectively. The third line displays the stochastic dynamic \eqref{eq:SDE_x} with the optimal velocity field. Our approach effectively captures the diminished variance in density, distinguishing itself from the stochastic dynamic, which exhibits a comparatively less structured behavior. 

\begin{figure*}[t!]
    \centering
    \includegraphics[width=0.242\textwidth]{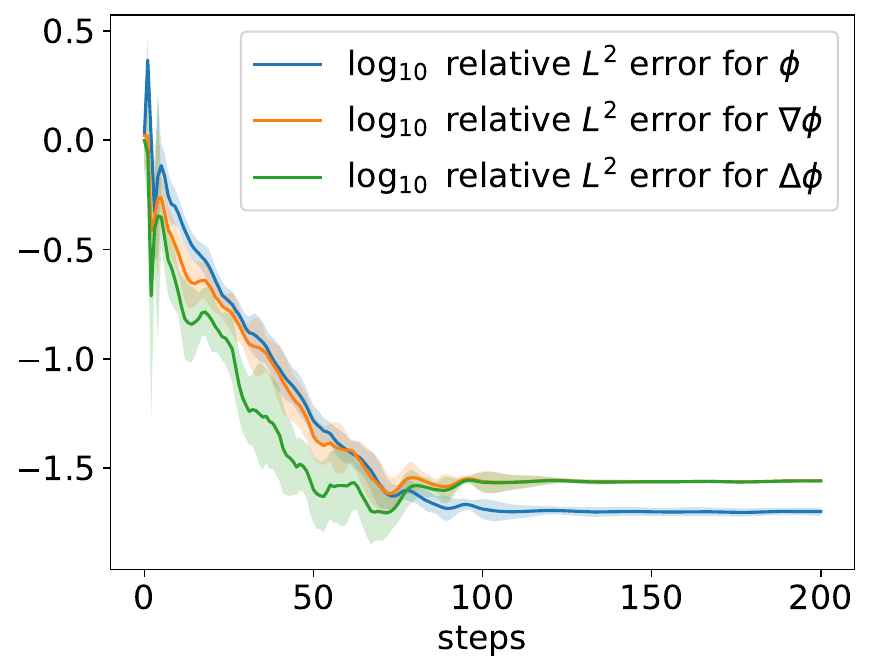}
    \includegraphics[width=0.242\textwidth]{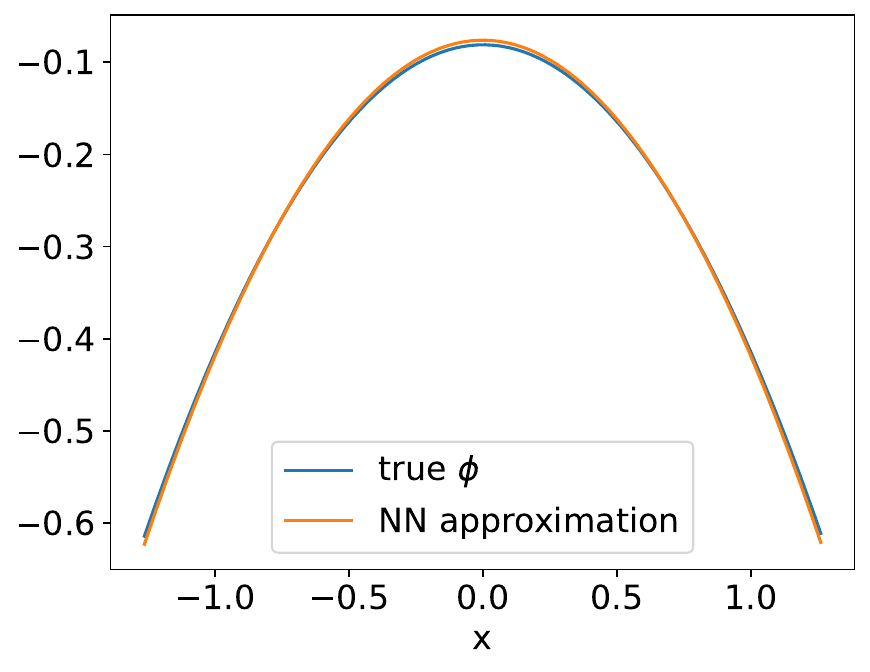}
    \includegraphics[width=0.242\textwidth]{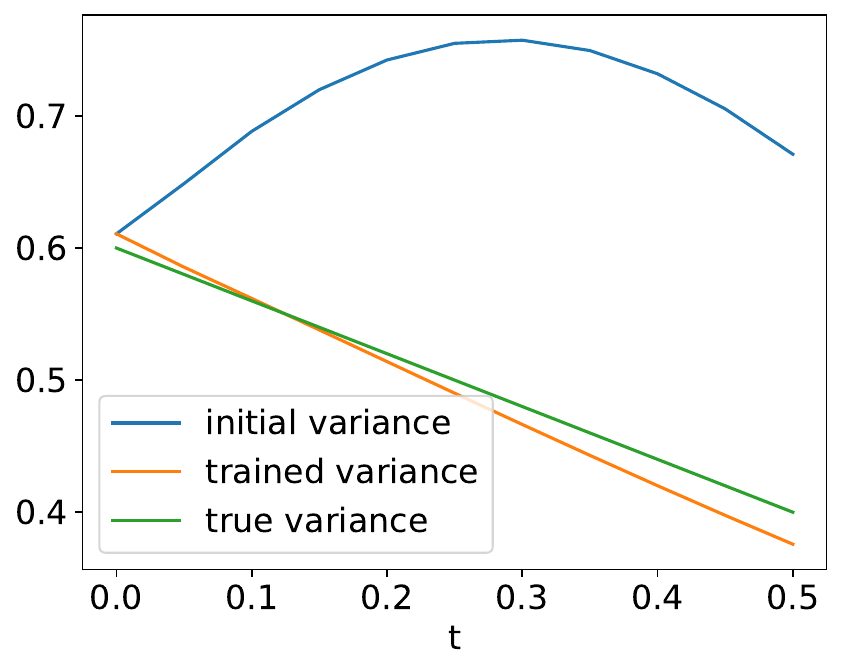}
    \includegraphics[width=0.242\textwidth]{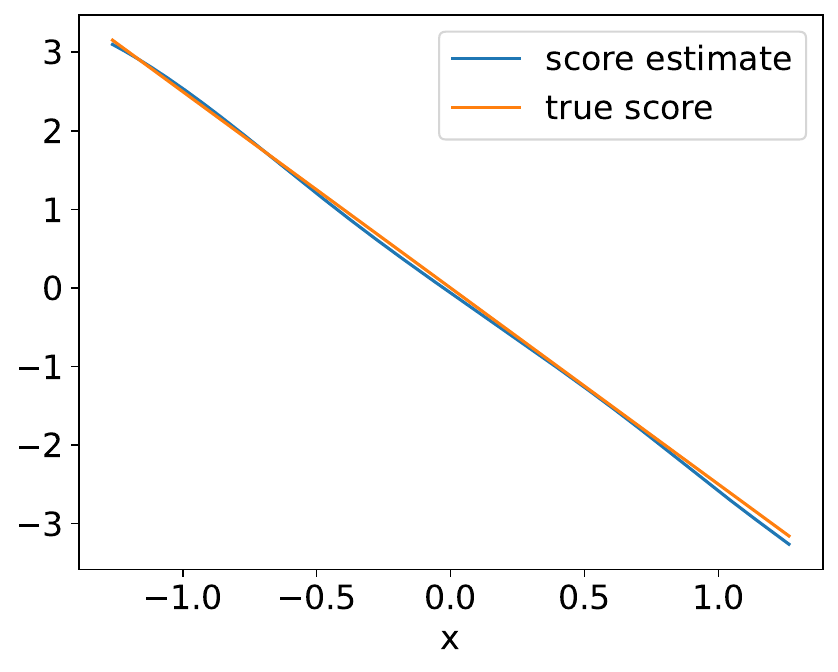}
    \includegraphics[width=0.242\textwidth]{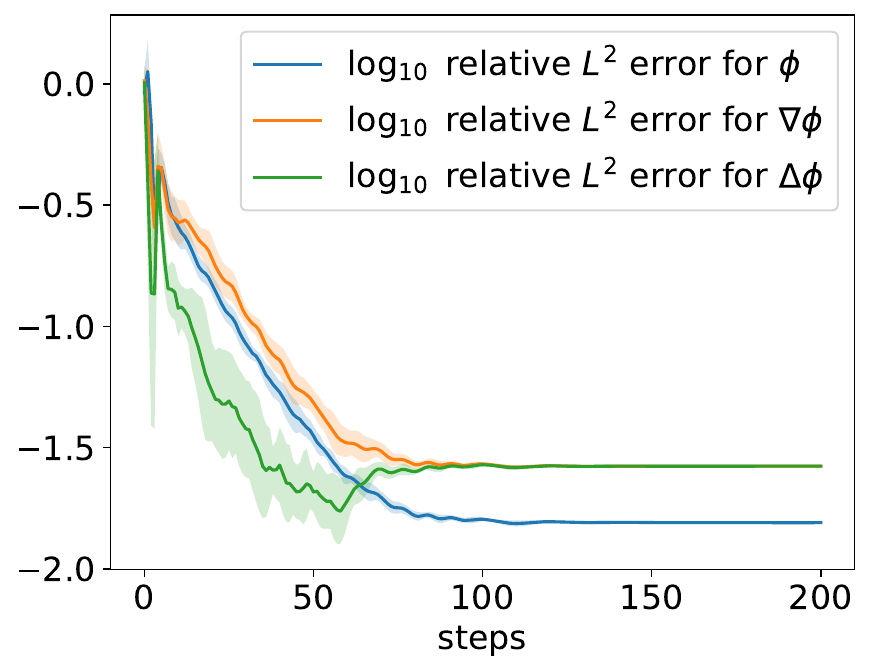}
    \includegraphics[width=0.242\textwidth]{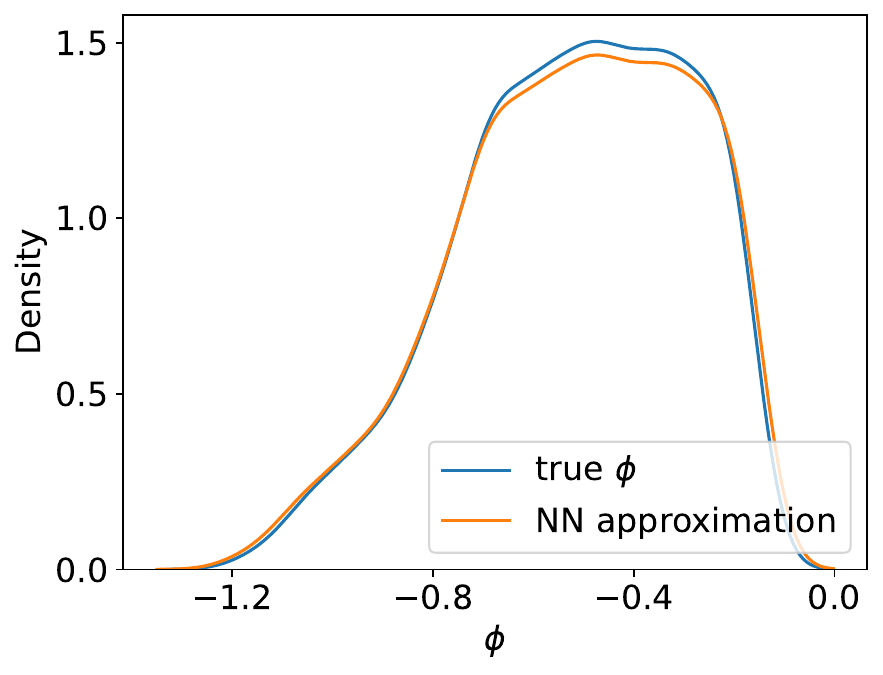}
    \includegraphics[width=0.49\textwidth]{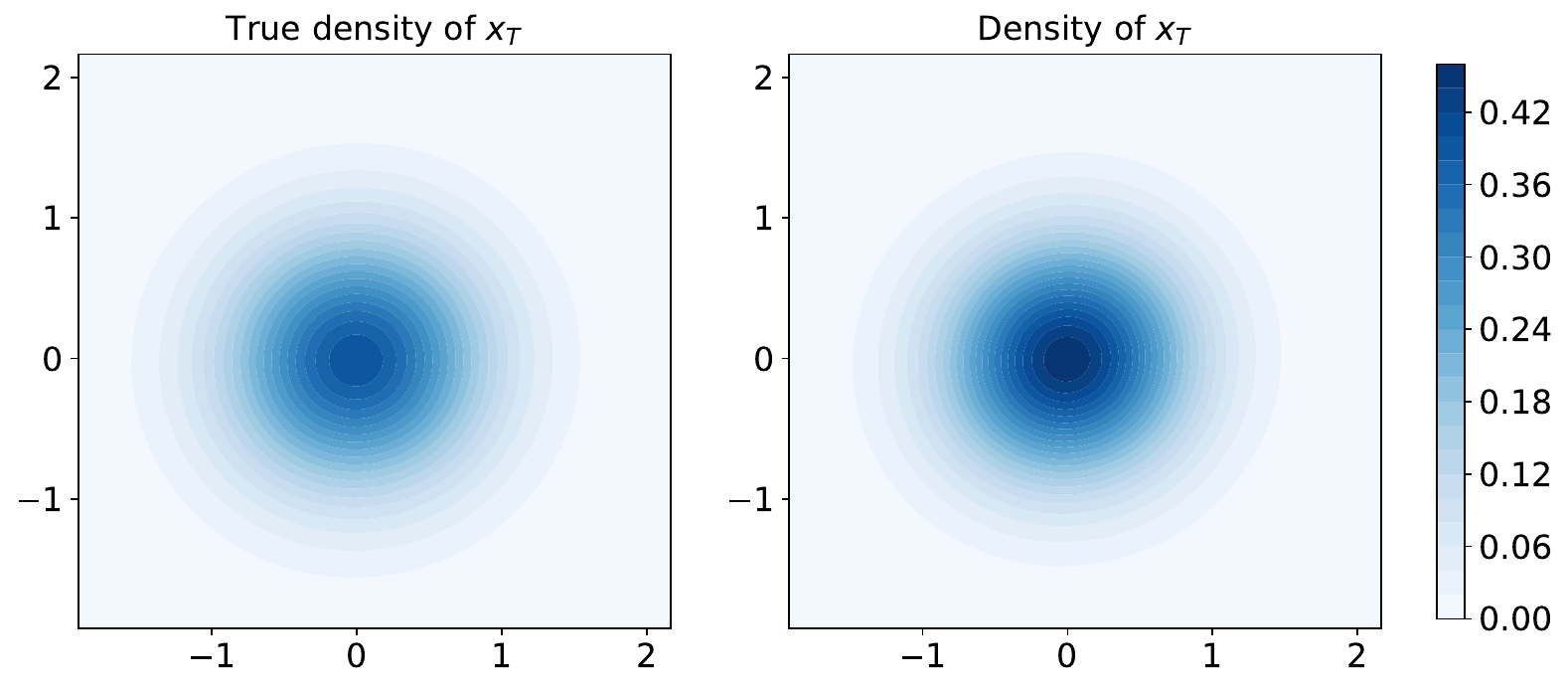}
    \caption{Numerical results for the LQ MFC problem. The first row presents results in $1$ dimension, including the training curve, plot of $\phi(0,\cdot)$, variance plot, and score plot. The second row is results for $2$ dimensions, including the training curve, density plot for $\phi(0,x_0)$, and a contour plot comparison for the density $\rho(T,x_T)$.}
    \label{fig:LQ}
\end{figure*}

\begin{figure*}[t!]
\centering
\includegraphics[width=\textwidth]{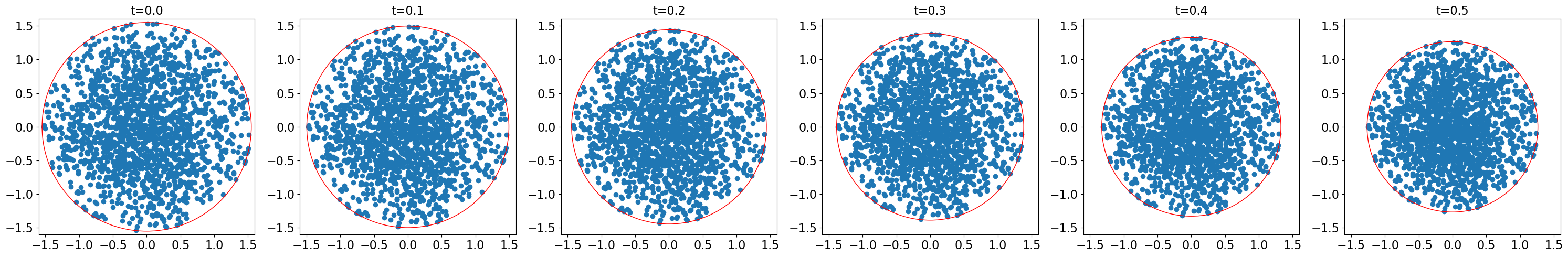}
\includegraphics[width=\textwidth]{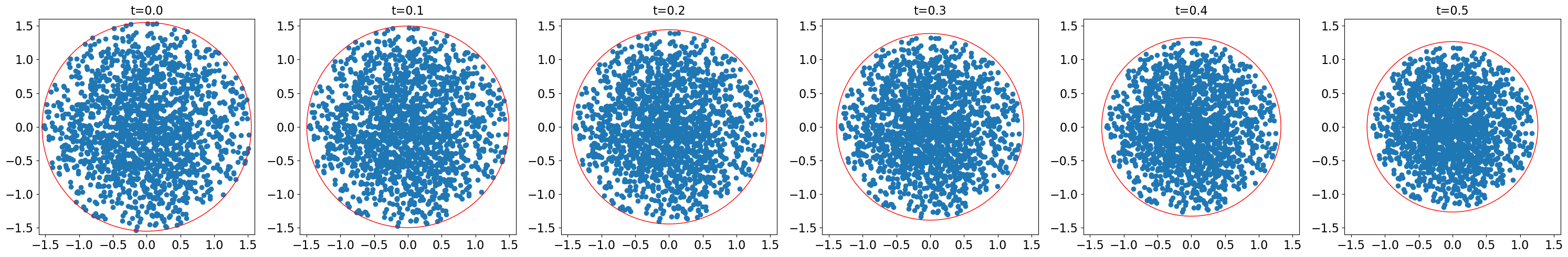}
\includegraphics[width=\textwidth]{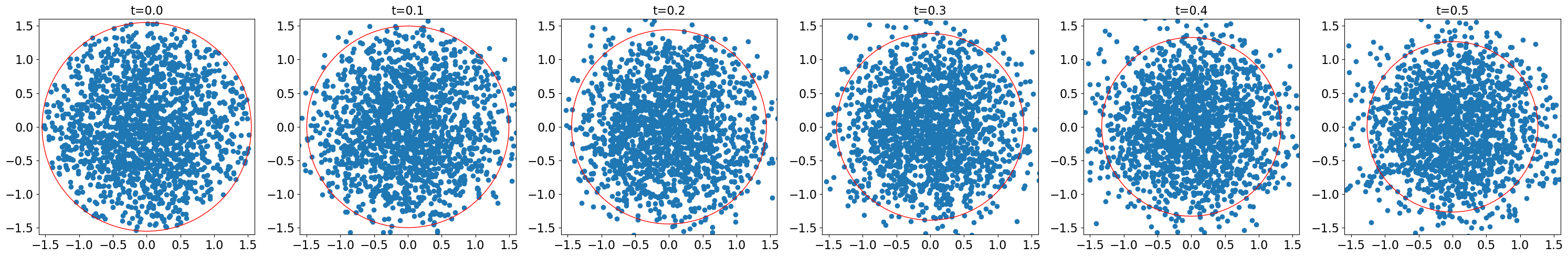}
\caption{Particle trajectories for the true score dynamic (first row), approximated score dynamic (second row), and stochastic dynamic (FBSDE) with true velocity (third row). The score dynamic demonstrates a more structured behavior.}
\label{fig:LQ2d_scatter}
\end{figure*}

\subsection{Systemic risk}\label{sec:SystemicRisk}
This example, as explored in \cite{carmona2015mean}, operates within one-dimensional space, where $x_t$ represents the logarithm of a bank's cash reserves. The model captures the interbank borrowing and lending activities through the drift. The objective is to minimize
\begin{equation}
J[\alpha] = \EE\sqbra{\int_0^T \parentheses{\frac12 \alpha_t^2 - q \alpha_t (\overline{x}_t - x_t) + \dfrac{\ve}{2} (\overline{x}_t-x_t)^2} \rd t + \dfrac{c}{2}(\overline{x}_T - x_T)^2}
\end{equation}
subject to the dynamic
\begin{equation*}
\rd x_t = \parentheses{a(\overline{x}_t-x_t) + \alpha_t} \rd t + \sigma \rd W_t,
\end{equation*}
where $\overline{x}_t = \EE x_t$  denotes the mean cash reserves. 

The FP--HJB system governing this scenario is represented by
\begin{equation*}
\left\{\begin{aligned}
&\pt \phi + \frac12 \abs{\nx \phi}^2 + (a+q)(\overline{x}_t-x) \cdot\nx \phi - \dfrac{\ve - q^2}{2}(\overline{x}_t-x)^2 + \frac12 \sigma^2 \Delta \phi=0,\\
&\pt \rho + \nx \cdot \sqbra{\rho\parentheses{(a+q)(\overline{x}_t-x) + \nx \phi}} = \frac12 \sigma^2 \Delta \rho,\\
& \rho(0,x)=\rho_0(x),\quad \phi(T,x)=-\dfrac{c}{2} (\overline{x}_T-x)^2. 
\end{aligned}\right.
\end{equation*}
where $\overline{x}_t = \int_\RR x \rho(t,x) \rd x$. The solution of the HJB equation is expressed by
$\phi(t,x) = -\eta_t (\overline{x}_t-x)^2 -\chi_t$, where $\eta_t$ is the solution to the following Riccati equation
$$\pt \eta_t = 2(a+q) \eta_t + \eta_t^2 + q^2 - \ve, \hspace{0.2in} \eta_T=c,$$
and $\chi_t = - \frac12 \sigma^2 \int_t^T \eta_s \rd s$.

This example is a generalization of the problem described in section \ref{sec:background}, because the density $\rho$ is involved in the Lagrangian (running cost), the terminal cost, and the velocity field.
Unlike the preceding examples, this scenario lacks an explicit expression for the terminal condition $\phi(T,\cdot)$. Consequently, a direct parametrization as in \eqref{eq:NN_terminal} is not possible. To address this, an additional loss term
$$T\cdot \EE\sqbra{\parentheses{\phi(T,x_T)+\dfrac{c}{2}(\overline{x}_T-x_T)}^2} $$
is introduced during training to enforce the terminal condition. In the numerical test, $\overline{x}_t$ is approximated using the empirical mean of the sampled points.

The numerical results are presented in Figure \ref{fig:SystemicRisk1d}, adopting a layout similar to the first row of Figure \ref{fig:static}. The final errors for $\phi$ and its derivatives are $2.18\%$, $1.76\%$, $0.95\%$. The errors for BSDE method are $2.82\%$, $2.48\%$, $1.06\%$. The final Wasserstein-2 error for the samples is \num{4.10e-2} for our score method and \num{4.88e-2} for the BSDE method. So our score method performs better than the BSDE method in this example.

\begin{figure*}[t!]
\centering
\includegraphics[width=0.242\textwidth]{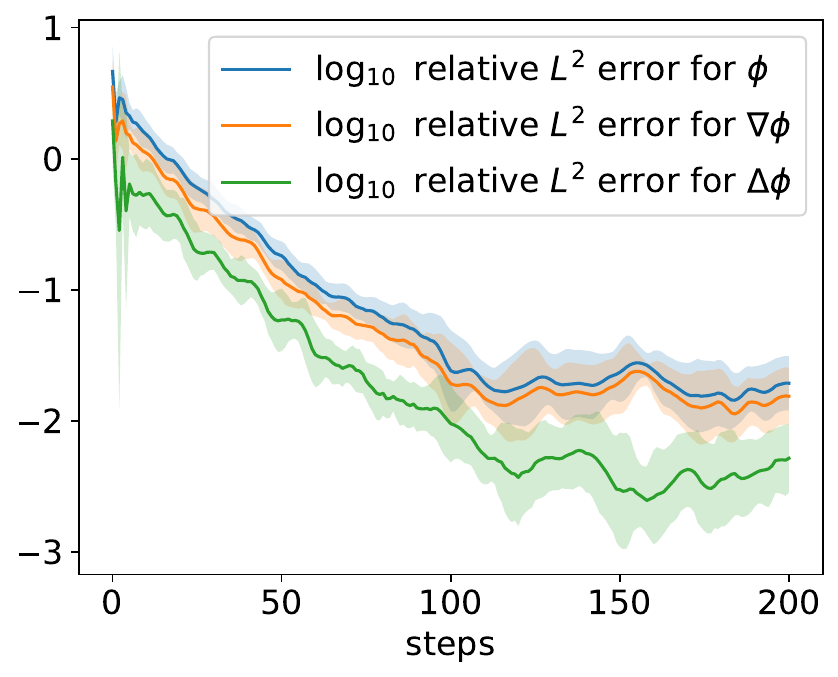}
\includegraphics[width=0.242\textwidth]{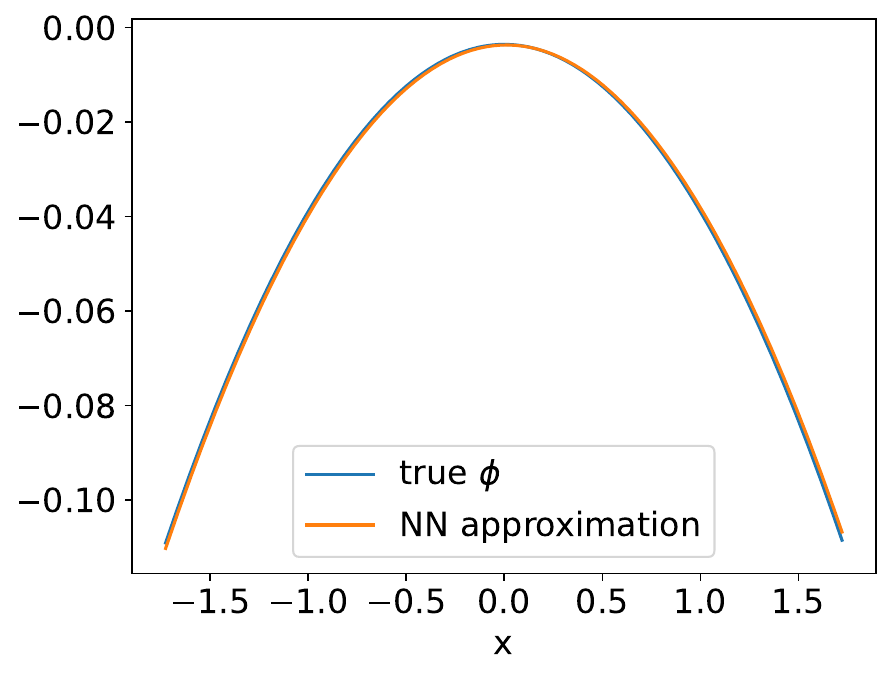}
\includegraphics[width=0.242\textwidth]{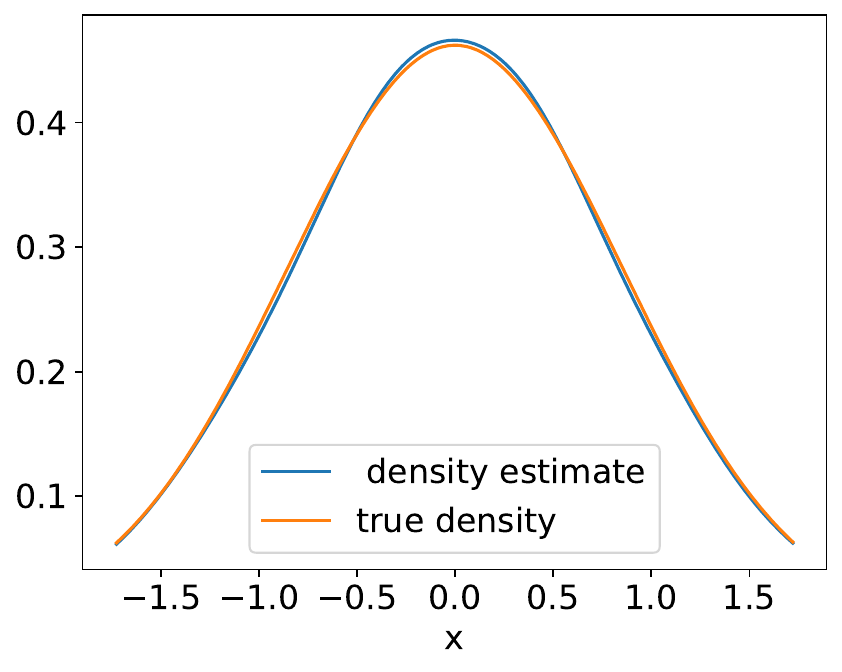}
\includegraphics[width=0.242\textwidth]{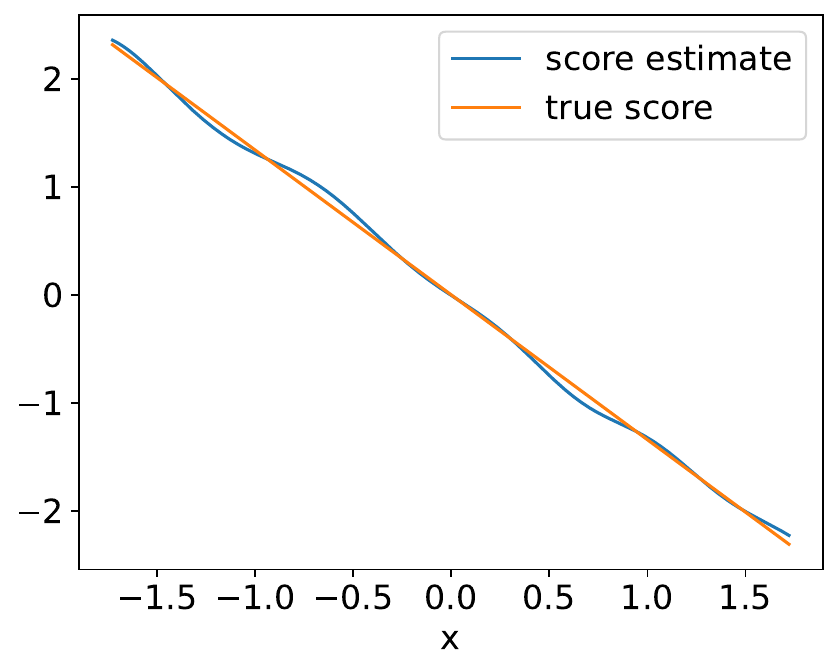}
\caption{Numerical results for the systemic risk example in $1$ dimension, including the training curve, plot of $\phi(0,\cdot)$, density plot, and score plot.}
\label{fig:SystemicRisk1d}
\end{figure*}

\section{Conclusion and future directions}
In this paper, we introduce a deep learning method to approximate the MFC problems. We first introduce forward-backward score dynamics, which formulates deterministic characteristics for the system of Fokker-Planck equations and Hamilton-Jacobi equations. In the algorithm, we apply the density estimation method to approximate the score function and use the neural network function to approximate the adjoint variable. We design a least square type loss function to fit the proposed deterministic characteristics. Numerical examples, including mean field control with entropy potential energies, linear quadratic problems, and system risks, demonstrate the effectiveness of the proposed methods.  

There are several promising directions for future research. From a theoretical perspective, an intriguing direction is to derive higher-order equations for our adjoint ODE system. Such equations could serve as a deterministic analog of the higher-order adjoint equation in FBSDE. From the numerical aspect, the accuracy of the density estimate with the Gaussian kernel is below our expectation. Further numerical studies on density estimation and a more comprehensive comparison against classical forward-backward SDE methods are important for a thorough understanding of our proposed methodology.
Finally, it is also important to generalize the algorithm to MFC problems where the diffusion matrix is under control. In this scenario, the HJB equation is fully nonlinear, resulting in a more complicated system involving score functions.

\textbf{Acknowledgements}
M. Zhou is partially supported by AFOSR YIP award No. FA9550-23-1-0087. M. Zhou and S. Osher are partially funded by AFOSR MURI FA9550-18-502 and ONR N00014-20-1-2787. Wuchen Li’s work is partially supported by AFOSR YIP award No. FA9550-23-1-0087, NSF DMS-2245097, and NSF RTG: 2038080.

\textbf{Data availability statement}
There is no data involved in this paper. Our algorithm is based on the packages in Python.  
\bibliography{ref}

\begin{appendices}
\section{Details for the numerical implementation}
For all the examples in Section \ref{sec:example}, we employ a fully connected network with $2$ layers and $30$ nodes in each layer, with the square of ReLU as the activation function. For the first two examples, we implement a hard parametrization of the terminal condition (cf. \eqref{eq:NN_terminal}). For all the examples, we set $N_t=10$, $k_{end}=200$. The number of validation samples is $1000d$ for all the examples. The default parameters for the PyTorch Adam optimizer are utilized for model training. Detailed parameters for all examples are provided in table \ref{tab:parameters}. We use the same network structure and same optimization scheme for our score method and the BSDE method.

\begin{table}[ht]
\centering
\resizebox{\textwidth}{!}{
\begin{tabular}{c|ccccc}
\hline
 & $T$ & Learning rate & $\sigma_K$ & batch size & others \\
 \hline
 potential energy 1d & 0.5 & 0.02 & 0.35 & 200 & $\gamma=0.1$ \\
 potential energy 2d & 0.5 & 0.1 & 0.4 & 1000 & $\gamma=0.1$ \\
 LQ 1d & 0.5 & 0.1 & 0.35 & 200 & $\beta=5$, $\gamma=0.1$ \\
 LQ 2d & 0.5 & 0.1 & 0.4 & 1000 & $\beta=5$, $\gamma=0.1$ \\
 Systemic risk & 0.1 & 0.02 & 0.3 & 400 & $\sigma=1$, $q=0.5$, $q=\ve=0.1$ \\
 \hline
\end{tabular}
}
\caption{Parameters for the examples.}
\label{tab:parameters}
\end{table}

We elucidate the computation of the Wasserstein-2 distance between the samples $\{x^{(i)}_T\}_{i=1}^N$ and the density $\rho(T,\cdot)$. Directly computing the Wasserstein distance between them is non-trivial and computationally expensive. Alternatively, we compute the mean and covariance of the samples $\{x^{(i)}_T\}_{i=1}^N$, subsequently determining the Wasserstein-2 distance between two Gaussian distributions: one with the computed mean and covariance, and the other representing the true density. In comparison, we also calculate the systemic error, defined as the Wasserstein-2 distance between the true density and the estimated Gaussian. The mean and covariance for this estimation are derived from data directly sampled from the true density. The Wasserstein-2 error and the corresponding systemic error are listed in table \ref{tab:Wasserstein}.

\begin{table}[ht]
\centering
\begin{tabular}{c|ccc}
\hline
 &  score method & BSDE method & systemic error\\
 \hline
 potential energy 1d& \num{6.41e-2} & \num{8.23e-2} & \num{4.20e-2}\\
 potential energy 2d& \num{9.18e-2} &\num{5.87e-2} & \num{4.28e-2}\\
 LQ 1d& \num{5.01e-2} &\num{4.81e-2} & \num{2.59e-2}\\
 LQ 2d& \num{6.25e-2} &\num{3.55e-2} & \num{2.64e-2}\\
 Systemic risk& \num{4.10e-2} &\num{4.88e-2} & \num{3.08e-2}\\
 \hline
\end{tabular}
\caption{Wasserstein-2 error and the systemic errors for all the examples. Each error is the average of multiple runs.}
\label{tab:Wasserstein}
\end{table}

Here, we remark that we also tested the Wasserstein-2 distance errors between $\{x^{(i)}_T\}_{i=1}^N$ and data sampled from the true distribution, with the help of the package \cite{flamary2021pot}. However, we observed that the systemic error, defined as the Wasserstein-2 distance between two sets of independent samples from the true distribution, was significantly large due to the limited sample size.

\end{appendices}

\end{document}